\providecommand{\qed}{}
\newtheorem{theorem}{Theorem}
\newtheorem{lemma}{Lemma}
\newtheorem{proposition}{Proposition}
\newtheorem{assumption}{Assumption}
\newcommand{\assumptionref}[1]{Assumption~\ref{#1}}
\newcommand{\modif}[1]{\textcolor{black}{#1}}
\newcommand{\ie}{i.e.}
\newcommand{\eg}{e.g.}
\newcommand{\R}{\mathbb{R}}
\newcommand{\T}{\top}
\newcommand{\defeq}{:=}
\newcommand{\ones}{\bm{1}}
\newcommand{\zeros}{\bm{0}}
\newcommand{\norm}[1]{\left\| #1 \right\|}
\newcommand{\order}[1]{\mathcal{O}\left( #1 \right)}
\newcommand{\minimize}{\mathop{\operatorname{minimize}}}
\providecommand{\prt}[1]{\left( #1 \right)}
\newcommand{\bx}{\bm{x}}
\newcommand{\by}{\bm{y}}
\newcommand{\bL}{\bm{L}}
\newcommand{\xbar}{\bar{x}}
\newcommand{\bxbar}{\bar{\bm{x}}}
\newcommand{\fa}{f_{\text{a}}}
\newcommand{\fb}{f_{\text{b}}}
\newcommand{\xa}{x^{\text{a}}}
\newcommand{\xb}{x^{\text{b}}}
\newcommand{\calF}{\mathcal{F}}
\newcommand{\fbar}{\bar{f}}
\newcommand{\alphabar}{\bar{\alpha}}
\newcommand{\betabar}{\bar{\beta}}
\newcommand{\kappabar}{\bar{\kappa}}
\title{\LARGE \bf
Stability of Decentralized Gradient Descent\\ in Open Multi-Agent Systems}
\author{Julien M. Hendrickx and Michael~G.~Rabbat
\thanks{
J.H. is with ICTEAM institute, UCLouvain (Belgium) {\tt\small julien.hendrickx@uclouvain.be}. 
His work was was supported by the \quotes{RevealFlight} Concerted Research Action (ARC) of the Federation Wallonie-Bruxelles, by the Incentive Grant for Scientific Research (MIS) \quotes{Learning from Pairwise Comparisons} of the F.R.S.-FNRS, and by a WBI World Excellence Fellowship.
M.R. is with Facebook AI Research, Montreal, Canada, 
        {\tt\small mikerabbat@fb.com}.%
}
}
\begin{document}

\maketitle
\thispagestyle{empty}
\pagestyle{empty}


\begin{abstract}
The aim of decentralized gradient descent (DGD) is to minimize a sum of $n$ functions held by interconnected agents. We study the stability of DGD in open contexts where agents can join or leave the system, resulting each time in the addition or the removal of their function from the global objective. Assuming all functions are smooth, strongly convex, and their minimizers all lie in a given ball, we characterize the sensitivity of the global minimizer of the sum of these functions to the removal or addition of a new function and provide bounds in 
$ O\prt{\min\prt{\kappa^{0.5}, \kappa/n^{0.5},\kappa^{1.5}/n}}$
where $\kappa$ is the condition number. We also show that the states of all agents can be eventually bounded independently of the sequence of arrivals and departures. The magnitude of the bound scales with the importance of the interconnection, which also determines the accuracy of the final solution in the absence of arrival and departure, exposing thus a potential trade-off between accuracy and sensitivity. Our analysis relies on the formulation of DGD as gradient descent on an auxiliary function.
The tightness of our results is analyzed using the PESTO Toolbox.
\end{abstract}

\section{Introduction}
\label{sec:intro}

Multi-agent consensus optimization methods aim to solve problems of the form
\begin{align} \label{eq:prob}
\minimize_{x_1, \dots, x_n \in \R^d} &\quad \sum_{i=1}^n f_i(x_i) \\
\text{subject to} &\quad x_i = x_j \text{ for all } i, j \text{ such that } a_{i,j} > 0.\nonumber
\end{align}
Here there are $n$ agents, agent $i \in [n]$ has access to a local objective function $f_i : \R^d \rightarrow \R$ and a local decision vector $x_i \in \R^d$, and the agents seek to reach agreement on the minimizer of $f(x) := \sum_{i \in [n]} f_i(x)$. The constraints couple the agents' local decision vectors through a communication graph with (weighted) adjacency matrix $A \in \R^{n \times n}_+$, where entry $a_{i,j}$ is (strictly) positive if and only if agent $i$ receives messages from agent $j$.

This class of methods has received much attention since the decentralized (sub)gradient descent method was introduced in~\cite{nedic2009distributed}. Previous work has primarily focused on extending these methods, e.g., by developing more efficient methods, understanding the dependence of the rate of convergence on $A$, considering directed and/or time-varying communication graphs, as well as various different assumptions about the class of functions being optimized; see~\cite{Nedic2018network} for a review.

Other works have also considered the case where the objective function may vary while the optimization method is being executed. Some studies formulate the problem as regret minimization~\cite{lee2017stochastic,shahrampour2018distributed} --- finding the single decision vector that minimizes the time-averaged objective function. Other studies aim to track the instantaneous minimizer in specific problem settings, such as linear regression~\cite{cattivelli2010diffusion}, or when the local objectives may vary smoothly in time~\cite{simonetto2017decentralized}. 

In this work we consider the setting in which one or more agent may change their objective function $f_i$ in a discontinuous manner as long as the objective remains in the class of smooth, strongly convex functions with bounded minimizer, and we characterize behavior with respect to the instantaneous minimizer. To motivate this setting, consider the following three scenarios.

1.~In open multi-agent systems~\cite{hendrickx2017open}, agents arrive and depart while the algorithm is executing. When an upper bound is available on the total number of agents that are active at any point in time, some such systems could be modeled by having agent objectives switch to $f_i(x) = 0$ when agent $i$ is inactive.

2.~When using clusters of servers to train large machine learning models, it is desirable to have \emph{elastic} algorithms where the number of servers may vary over time~\cite{narayanamurthy2013elastic}. Some servers may fail during execution and leave the system. In other cases, execution may start when some minimum number of servers is available, and additional servers may join during execution as they become available.

3.~In secure distributed learning~\cite{chen2018secure} and federated learning~\cite{mcmahan2017communication} systems, agents use techniques from secure multi-party computation to aggregate information without revealing the values of their individual gradients to any other agent in the network (including their neighbors). An outlier or adversarial agent may wish to exert influence by modifying their local objective $f_i$.

Our contributions are as follows. We characterize the extent to which a single agent can influence the minimizer when each function $f_i$ is smooth, strongly convex, and its minimizer has bounded norm. Then we go on to study the extent to which one agent can move the minimizer by changing its objective function. Finally, we establish that decentralized gradient descent is stable: there exists a bounded set such that once the iterates of the decentralized gradient method enter the set they remain in the set even if agents change their objective at every iteration. Hence, agents cannot drive the iterates to become unbounded.

\section{Problem formulation and notation}
\label{sec:prob}

Throughout this work, we denote the Euclidean norm by $\norm{\cdot}$; so $\norm{x} = (x^\T x)^{1/2}$ for a vector $x$, and $\norm{A}$ denotes the largest singular value of a matrix $A$. The set of the first $n$ natural numbers is denoted by $[n]$, and $\R_+$ denotes the set of non-negative real numbers.

\subsection{Objective functions}
The following assumptions about the local objectives $f_i$ hold throughout this paper.
\begin{assumption} \label{ass:f_i}
Each function $f_i$, $i \in [n]$, is continuously differentiable, $\alpha$-strongly convex (\modif{$f_i(x) - \frac{\alpha}{2}\norm{x}^2$ is convex}) and $\beta$-smooth (\modif{$\norm{\nabla f_i(x) - \nabla f_i(y)} \le \beta \norm{x - y}$, $\forall x,y$}).  
\end{assumption}

\modif{We denote by $\mathcal{F}_{\alpha, \beta}$ the set of functions satisfying assumption \assumptionref{ass:f_i}. The \emph{condition number} of these functions is $\kappa = \beta/\alpha$, and plays an important role in the convergence analysis of algorithms optimizing such functions.}

\modif{The strong convexity part of \assumptionref{ass:f_i}} implies that $f = \sum_{i \in [n]} f_i$ is strongly convex, and hence has a unique global minimizer which we denote by $x^* \in \R^d$. If a local function $f_i$ is allowed to change arbitrarily, then clearly the minimizer of $f$ can also be changed arbitrarily. To disallow this, we impose a form of normalization on the local objectives. Let $B(x,r) = \{y : \norm{x - y} \le r\}$ denote the ball of radius $r$ centered at $x$.

\begin{assumption} \label{ass:x_i*}
Let $x_i^* \defeq \arg\min_x f_i(x)$ denote the minimizer of $f_i$. For each $i \in [n]$, we assume that $f_i(x_i^*) = \min_x f_i(x) = 0$ and that $x_i^* \in B(\zeros_d, 1)$, where $\zeros_d$ denotes the vector of zeros in $\R^d$.
\end{assumption}

The minimizer of a strongly convex function is finite, so the assumption that $x_i^* \in B(0,1)$ for all $i$ can be seen as holding generally by rescaling the space. Similarly, since we will mainly be concerned with understanding how $x^*$ can change if one $f_i$ changes and we do not use the actual values of the $f_i$, there is no loss of generality by assuming that $f_i(x_i^*) = 0$. 
In the sequel, we denote by $\calF_{\alpha, \beta}$ the set of functions that simultaneously satisfy Assumptions~\ref{ass:f_i} and~\ref{ass:x_i*}.

\subsection{Communication graph and mixing matrix}

Recall that communication constraints, describing which agents communicate directly, are captured by an agent mixing matrix $A \in \R_+^{n \times n}$, where $a_{i,j} > 0$ if and only if agent $i$ receives messages from agent $j$. The following assumptions about $A$ hold throughout.
\begin{assumption} \label{ass:A}$ $
\begin{enumerate}
\renewcommand{\theenumi}{\roman{enumi}}
\item \modif{The matrix $A$ is symmetric.}
\item All diagonal entries $a_{i,i} > 0$ are positive.
\item The matrix $A$ is primitive: there exists an integer $k > 0$ such that every entry of $A^k$ is positive.
\end{enumerate}
\end{assumption}

It is common to view $A$ as the (weighted) adjacency matrix of a graph $G = (V, E)$, with vertices $V = [n]$ corresponding to agents and edges $E = \{(i,j) : a_{i,j} > 0, i \ne j\}$ between agents that exchange messages. In this case, the third item in \assumptionref{ass:A} is equivalent to assuming that the graph $G$ is connected.

We will make use of standard concepts from spectral graph theory. Let $D$ denote a $n \times n$ diagonal matrix with entries $D_{i,i} = \sum_j a_{i,j}$ equal to the weighted degree of agent $i$. The graph Laplacian matrix is $L = D - A$. 
Since $A$ is symmetric, by \assumptionref{ass:A}, $L$ is too, and so it has an eigendecomposition. Let $\lambda_1 \le \lambda_2 \le \dots \le \lambda_n$ denote the $n$ eigenvalues of $L$ sorted in ascending order. It follows from \assumptionref{ass:A}(iii) that $\lambda_1 = 0$ and $\lambda_2 > 0$, therefore $L$ is positive semi-definite. In addition, the null space of $L$ is spanned by $\ones_d$, the $d$-dimesional vector of ones.

Let $I_d$ denote the $d \times d$ identity matrix, and let $\bL = L \otimes I_d$. Let $\bx \in \R^{nd}$ denote the vector obtained by stacking $x_1, \dots, x_n$, where $x_i \in \R^d$ is the decision vector at agent $i$. One can verify that
\begin{equation}\label{eq:bxbLbx}
\bx^\T \bL \bx = \sum_{i \in [n]} \sum_{j > i} a_{i,j} \norm{x_i - x_j}^2.
\end{equation}
Similarly, if we let $\by = \bL \bx \in \R^{nd}$ denote a vector with blocks $y_1, \dots, y_n \in \R^d$, then one can verify that
\begin{equation}\label{eq:bLbx_i}
y_i = - \sum_{j=1}^n a_{i,j} (x_i - x_j).
\end{equation}

\subsection{Decentralized Gradient Descent (DGD)}\label{sec:def_DGD}

DGD is commonly defined by the iterations
\begin{equation} \label{eq:dgd}
x_i^{k+1} = \sum_{j=1}^n \tilde a_{i,j} x_i^k - \eta \nabla f_i(x_i^k),
\end{equation}
with stochastic matrix $\tilde A$~\cite{nedic2009distributed}. Here we consider a more general form of DGD given by
\begin{align} \label{eq:algo}
x_i^{k+1} &= x_i^k - \eta\left(\nabla f_i(x_i^k) + \rho \sum_{j=1}^n a_{i,j} \modif{(x_i^k - x_j^k)} \right).
\end{align}
Note that \eqref{eq:algo} and \eqref{eq:dgd} are equivalent provided that $\eta \rho$ is not too large, as can be seen by taking $\tilde a_{i,j} = \eta \rho a_{i,j}$ for $i\neq j$ and $\tilde a_{i,i} = 1 - \eta\rho \sum_j a_{i,j}$.

We study \eqref{eq:algo} because it can be seen as applying gradient descent to a penalized objective function \cite{yuan2016convergence}. Recall that $\bx \in \R^{nd}$ denotes the vector obtained by stacking $x_1, \dots, x_n$, and define $F(\bx) \modif{\defeq} \sum_{i \in [n]} f_i(x_i)$. One way to incorporate the constraints in \eqref{eq:prob} is though quadratic penalties: 
\begin{align} \label{eq:def_Frho}
F_\rho(\bx) & \defeq F(\bx) + \frac{\rho}{2} \sum_{i \in [n]} \sum_{j > i} a_{i,j} \norm{x_i - x_j}^2 \\
&= F(\bx) + \frac{\rho}{2} \bx^T \bL\bx.\nonumber
\end{align}
where $\rho > 0$ controls the weight given to the penalty relative to the objective 
$F$ 
and we have used \eqref{eq:bxbLbx} for the second \modif{equality.} Minimizing $F_\rho$ leads to a trade-off governed by $\rho$ between the equality of the $x_i$ and the minimization of the $f_i$. 

Applying the gradient method with constant step-size $\eta > 0$ to minimize $F_\rho$ leads to updates of the form
$$
\bx^{k+1} = \bx^k- \eta (\nabla F(\bx) + \rho \bL \bx),
$$
which is equivalent to \eqref{eq:algo}. By observing that the updates~\eqref{eq:algo} correspond to the gradient method applied to $F_\rho$, standard  results (\eg,~\cite{nesterov2003intro,bubeck2015convex}) directly give that the iterates \eqref{eq:algo} converge linearly to the minimizer of $F_\rho$ when the step size $\eta$ is chosen appropriately.

The following result establishes that $f$, $F$, and $F_\rho$ are smooth and strongly convex; the proof involves standard algebraic manipulations and \modif{is omitted for space reasons.}

\begin{proposition} \label{prop:f_properties}
Suppose that $f_1, \dots, f_n \in \calF_{\alpha, \beta}$. Then:
\renewcommand{\theenumi}{\roman{enumi}}
\begin{enumerate}
\item $F$ is $\alpha$-strongly convex and $\beta$-smooth.
\item $F_\rho$ is $\alpha$-strongly convex and $(\beta + \rho \lambda_n)$-smooth, where $\lambda_n$ is the largest eigenvalue of $L$.
\item $f$ is $n\alpha$-strongly convex and $n\beta$-smooth.
\end{enumerate}
\end{proposition}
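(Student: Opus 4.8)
The plan is to handle each of the three claims by reducing it to the two defining properties used in the paper — namely that the difference between the function and the appropriate quadratic is convex (for strong convexity) and that the gradient is Lipschitz with the stated constant (for smoothness) — and then to exploit the separable or additive structure of $F$, $F_\rho$, and $f$ together with the spectral properties of $\bL$. To avoid assuming the $f_i$ are twice differentiable, I would work throughout with these first-order characterizations rather than with Hessians.

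For part (i), I would observe that $F$ is block-separable: $F(\bx) = \sum_{i\in[n]} f_i(x_i)$, with each $x_i$ an independent block of $\bx$. Strong convexity follows because $F(\bx) - \frac{\alpha}{2}\|\bx\|^2 = \sum_i \bigl(f_i(x_i) - \frac{\alpha}{2}\|x_i\|^2\bigr)$ is a sum of convex functions of disjoint blocks, hence convex. For smoothness, the gradient $\nabla F(\bx)$ stacks the blocks $\nabla f_i(x_i)$, so $\|\nabla F(\bx) - \nabla F(\by)\|^2 = \sum_i \|\nabla f_i(x_i) - \nabla f_i(y_i)\|^2 \le \beta^2 \sum_i \|x_i - y_i\|^2 = \beta^2 \|\bx - \by\|^2$, which yields the constant $\beta$.

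For part (ii), I would write $F_\rho = F + \frac{\rho}{2}\bx^\T\bL\bx$ and use that $\bL = L\otimes I_d$ is positive semidefinite (since $L$ is positive semidefinite by \assumptionref{ass:A}) with largest eigenvalue $\lambda_n$, equal to its spectral norm. Strong convexity is inherited from $F$, because $F_\rho(\bx) - \frac{\alpha}{2}\|\bx\|^2$ is the sum of the convex function $F(\bx) - \frac{\alpha}{2}\|\bx\|^2$ from part (i) and the convex quadratic $\frac{\rho}{2}\bx^\T\bL\bx$. For smoothness, since $\nabla F_\rho = \nabla F + \rho\bL\bx$, the triangle inequality gives $\|\nabla F_\rho(\bx) - \nabla F_\rho(\by)\| \le \|\nabla F(\bx) - \nabla F(\by)\| + \rho\|\bL\|\,\|\bx - \by\| \le (\beta + \rho\lambda_n)\|\bx - \by\|$, using part (i) together with $\|\bL\| = \|L\| = \lambda_n$.

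For part (iii), $f(x) = \sum_{i\in[n]} f_i(x)$ is an ordinary (non-separable) sum over the common variable $x$. Strong convexity follows as before from the fact that $f(x) - \frac{n\alpha}{2}\|x\|^2 = \sum_i \bigl(f_i(x) - \frac{\alpha}{2}\|x\|^2\bigr)$ is a sum of $n$ convex functions; smoothness follows from $\|\nabla f(x) - \nabla f(y)\| = \bigl\|\sum_i (\nabla f_i(x) - \nabla f_i(y))\bigr\| \le \sum_i \beta\|x - y\| = n\beta\|x-y\|$ by the triangle inequality. None of the three parts presents a genuine obstacle; the only points requiring care are that the strong-convexity arguments use the \emph{difference-is-convex} characterization, so that no second-order smoothness of the $f_i$ is needed, and that in part (ii) one correctly identifies the spectral norm of the Kronecker product $\bL = L \otimes I_d$ with $\lambda_n$.
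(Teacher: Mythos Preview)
Your argument is correct and complete: each of the three parts is handled with the appropriate first-order characterization, the separable structure of $F$, the spectral bound $\|\bL\|=\lambda_n$ for the Kronecker product, and the additive structure of $f$ are all used properly. The paper itself omits the proof, merely noting that it involves ``standard algebraic manipulations,'' so your write-up is precisely the kind of standard verification the authors had in mind.
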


\section{Location of the minimizers}

Since $f$, $F$, and $F_\rho$ are strongly convex, they each have a unique minimizer. Moreover, since $F(\bx)$ is separable in $x_1, \dots, x_n$, its minimizer $\bx^* = \arg\min_{\bx} F(\bx)$ is trivially the vector obtained by stacking $x_1^*, \dots, x_n^*$, where $x_i^* = \arg\min_x f_i(x)$, and thus $\bx^* \in B(\zeros_d, 1)^n$.

\begin{figure}
\centering
\begin{tabular}{cc}
\includegraphics[scale =.35]{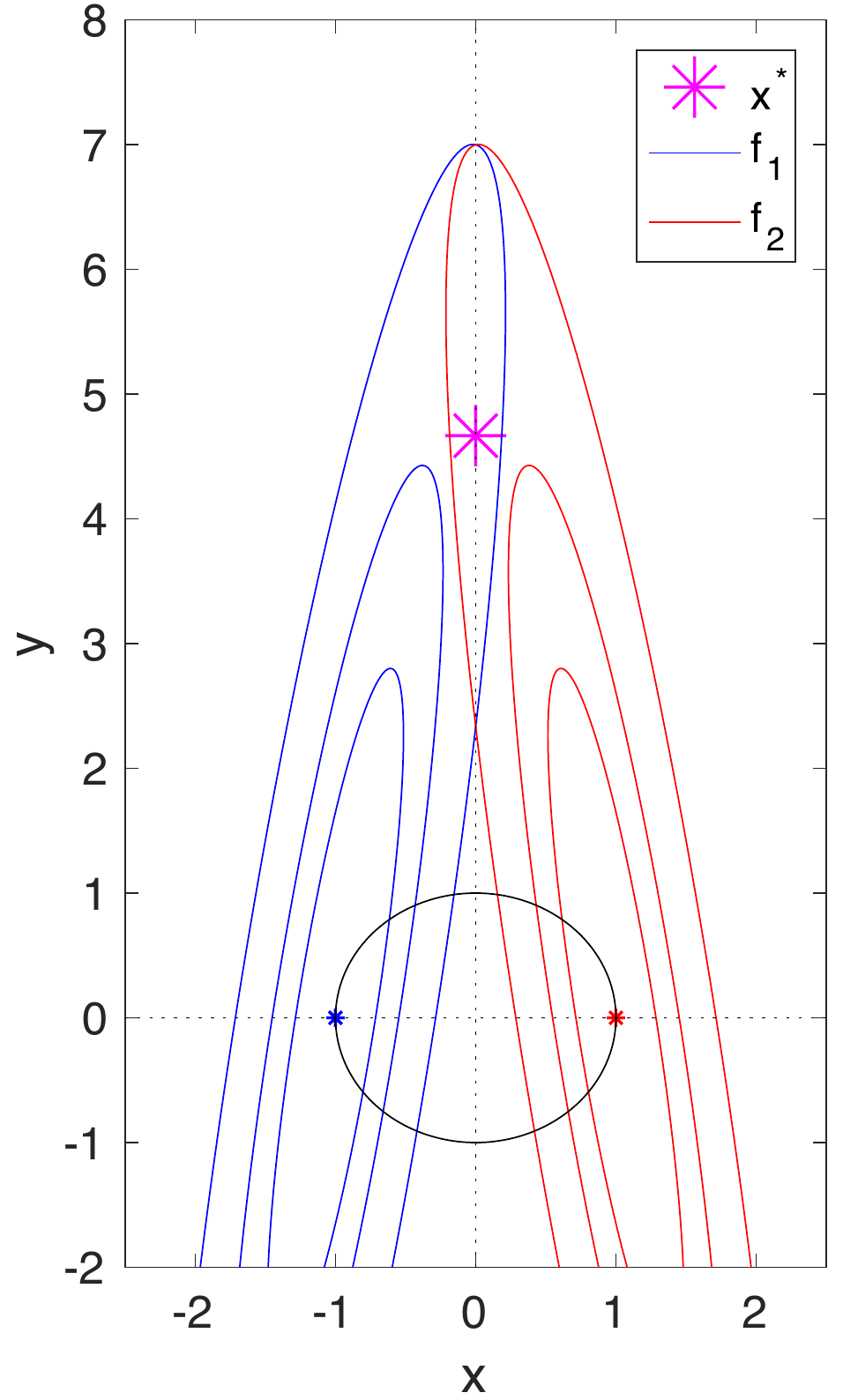}&\includegraphics[scale =.35]{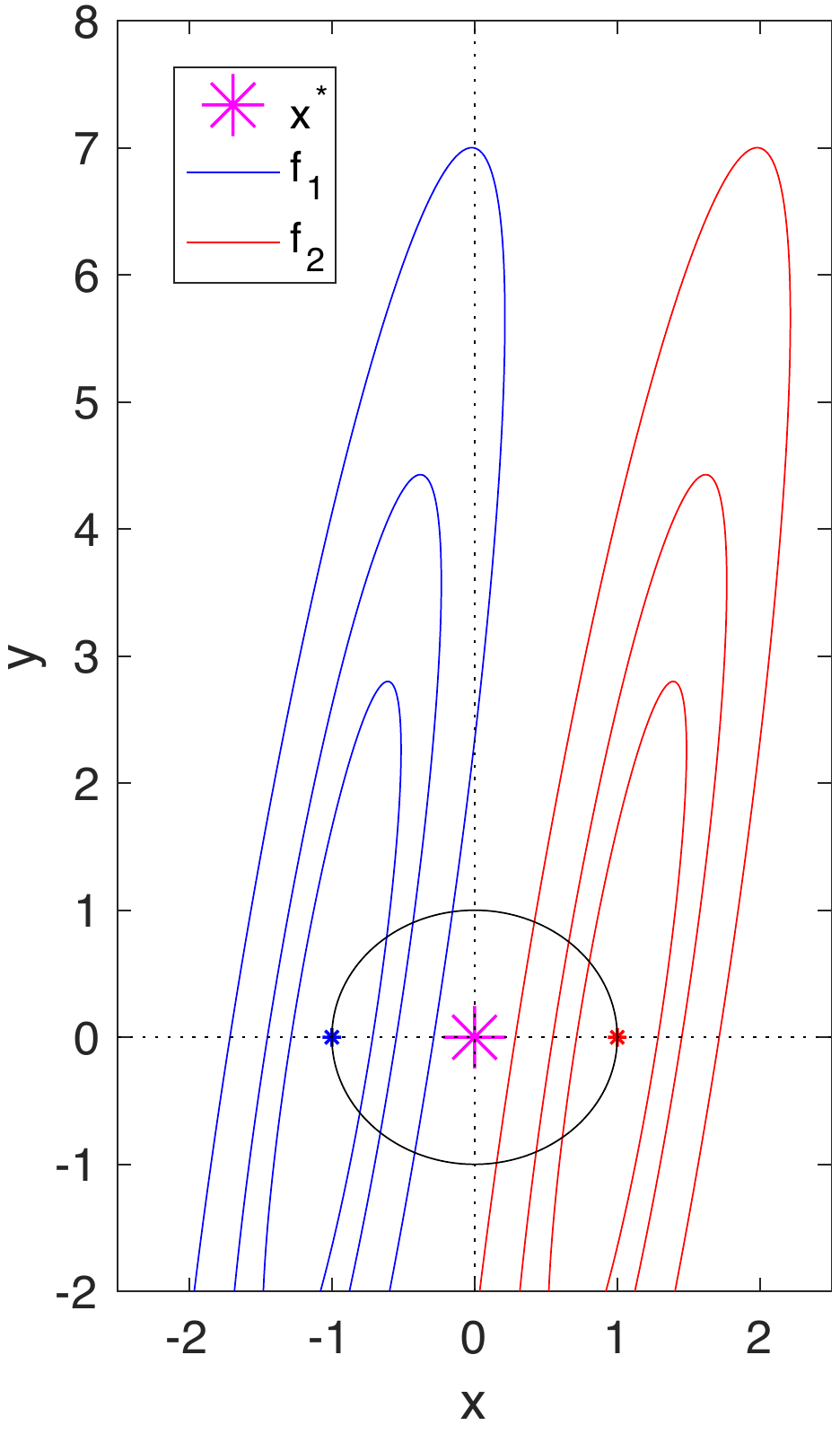}\\(a)&(b)
\end{tabular}
\caption{Representation of the level sets of $f_1$ and $f_2$ defined in \eqref{eq:ex_function_1} in (a), and of $f_1$ from \eqref{eq:ex_function_1} and $f_b$ form \eqref{eq:ex_fb} (b), together with the minimizer $x^*$ of their sum, for $\kappa = 100$ and $ \phi = \tan^{-1}\kappa^{-1/2}\simeq 0.0997$ (rad) . One can see the minimizer can be at a distance $\simeq \sqrt{\kappa}/2$ of 0, and that a small rotation of a function can have an important effect on its position.
}\label{fig:ex_functions}
\end{figure}

\modif{Since the minimizers $x_i^*$ of the $f_i$ are all in $B(0_d,1)$ in view of Assumption \ref{ass:x_i*},}
one may have hoped that $x^*=\arg\min f(x)=\sum_{i\in [n]}f_i(x)$ would be close to that ball. Unfortunately, this is far from true, as \modif{shown} by the following example. Consider the following two quadratic functions 
\begin{align}\label{eq:ex_function_1}
    f_1(x,y) &= \frac{\beta}{2}((x + 1)\cos \phi-y \sin \phi )^2 \nonumber \\
    &\quad+ \frac{\alpha}{2}( (x + 1) \sin \phi+y \cos \phi)^2 \\
    f_2(x,y) &= \frac{\beta}{2}((x - 1)\cos \phi+y \sin \phi )^2 \nonumber \\
    &\quad + \frac{\alpha}{2}( - (x + 1) \sin \phi+y \cos \phi)^2 
\end{align}
for scalar coordinates $x$ and $y$ and some angle $\phi$, as illustrated in Figure \ref{fig:ex_functions}. Their minimizers are clearly $(-1,0)$ and $(1,0)$, both in $B(0_2,1)$, and one can verify that they are $\alpha$-strongly convex and $\beta$-smooth, as they can be obtained by applying a rotation $\pm\phi$ to $\frac{\modif{\alpha}}{2}x^2 + \frac{\modif{\beta}}{2}y^2$ followed by a translation.
Simple derivations show that the minimum of $f_1+f_2$ is reached at $\prt{0,\frac{(\kappa-1)\tan \phi}{1+\kappa \tan^2\phi }}$, which, for $\tan \phi =1/\sqrt{ \kappa}$ becomes
$\prt{0,\frac{\kappa - 1}{2\sqrt{\kappa}}}$. So even though the minimizer of each function is in $B(0_2,1)$, the norm of the minimizer of their sum scales as $\order{\sqrt{\kappa}}$. The next result shows that this is indeed the worst scaling possible up to a constant factor.

\begin{theorem} \label{thm:x*localization}
Let $\kappa = \beta / \alpha$ denote the condition number of $f$, and let $R_\kappa \defeq 1 + \sqrt{\kappa}$. If $f_i \in \calF_{\alpha, \beta}$ for all $i=1,\dots,n$, then the following two inclusions, \modif{respectively in $\R^d$ and $\R^{nd}$}, hold:
\begin{align}
&\arg\min_{x} f(x) \in B(\zeros_d, R_\kappa), \text{ and } \label{eq:x*location} \\
&\arg\min_{\bx} F_\rho(\bx) \in B(\zeros_d, R_\kappa)^n. \label{eq:bx*location}
\end{align}
\end{theorem}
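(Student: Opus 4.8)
The plan is to treat the two inclusions separately, but to base both on the quadratic sandwich that follows immediately from Assumptions~\ref{ass:f_i} and~\ref{ass:x_i*}. Since $\nabla f_i(x_i^*) = 0$ and $f_i(x_i^*) = 0$, strong convexity and smoothness give $\frac{\alpha}{2}\norm{x - x_i^*}^2 \le f_i(x) \le \frac{\beta}{2}\norm{x - x_i^*}^2$ for every $i$ and every $x$; in particular $f_i(\zeros_d) \le \frac{\beta}{2}\norm{x_i^*}^2 \le \frac{\beta}{2}$ using $\norm{x_i^*}\le 1$.

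For the first inclusion \eqref{eq:x*location} I would use the optimality of $x^* = \arg\min f$ only through $f(x^*) \le f(\zeros_d)$. The upper sandwich gives $f(x^*) \le f(\zeros_d) = \sum_i f_i(\zeros_d) \le \frac{n\beta}{2}$, while the lower sandwich gives $f(x^*) = \sum_i f_i(x^*) \ge \frac{\alpha}{2}\sum_i \norm{x^* - x_i^*}^2$. Combining yields $\sum_i \norm{x^* - x_i^*}^2 \le n\kappa$. Since $\norm{x_i^*}\le 1$, the triangle inequality gives $\norm{x^* - x_i^*} \ge \norm{x^*} - 1$ for each $i$; if $\norm{x^*} \ge 1$, summing $(\norm{x^*}-1)^2$ over the $n$ agents and dividing by $n$ forces $(\norm{x^*}-1)^2 \le \kappa$, hence $\norm{x^*} \le 1 + \sqrt{\kappa} = R_\kappa$ (the case $\norm{x^*} < 1$ being immediate).

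The second inclusion \eqref{eq:bx*location} is the delicate part, since a function-value argument of the above type only places \emph{one} block of $\arg\min F_\rho$ inside the ball, whereas every block must be there. The key idea is to single out the agent $m$ whose block $z_m$ has the largest norm among the blocks $z_1,\dots,z_n$ of $\arg\min F_\rho$. First-order optimality gives $\nabla f_m(z_m) = -\rho \sum_j a_{m,j}(z_m - z_j)$; taking the inner product with $z_m$ and using $a_{m,j}\ge 0$ together with $\langle z_j, z_m\rangle \le \norm{z_j}\,\norm{z_m} \le \norm{z_m}^2$ (as $m$ attains the maximal norm) shows $\langle \nabla f_m(z_m), z_m\rangle \le 0$ — informally, the penalty pulls $z_m$ inward. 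Applying $\alpha$-strong convexity of $f_m$ between $z_m$ and the origin, $f_m(\zeros_d) \ge f_m(z_m) - \langle \nabla f_m(z_m), z_m\rangle + \frac{\alpha}{2}\norm{z_m}^2$, and dropping the two nonnegative terms $f_m(z_m)\ge 0$ and $-\langle \nabla f_m(z_m), z_m\rangle \ge 0$ gives $\frac{\alpha}{2}\norm{z_m}^2 \le f_m(\zeros_d) \le \frac{\beta}{2}$. Hence $\norm{z_m} \le \sqrt{\kappa} \le R_\kappa$, and since $m$ was the maximizing index, every block lies in $B(\zeros_d, R_\kappa)$.

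The main obstacle is exactly this passage from \quotes{some block} to \quotes{every block} in \eqref{eq:bx*location}: once one recognizes that the maximal-norm agent has an inward-pointing penalty gradient, the coupled minimization collapses to a single-function strong-convexity estimate, and the resulting constant even comes out slightly better than $R_\kappa$. All remaining steps are the routine sandwich inequalities recorded in the first paragraph.
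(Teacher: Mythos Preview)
Your proof of \eqref{eq:x*location} is correct and coincides with the paper's: both use the quadratic sandwich $\frac{\alpha}{2}\norm{x-x_i^*}^2 \le f_i(x) \le \frac{\beta}{2}\norm{x-x_i^*}^2$ together with $\norm{x_i^*}\le 1$ to compare $f$ at the candidate point with $f(\zeros_d)$, arriving at exactly $\norm{x^*}\le 1+\sqrt{\kappa}$.

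For \eqref{eq:bx*location} your argument is correct but genuinely different from the paper's. The paper proceeds variationally: given any $\bx\notin B(\zeros_d,R_\kappa)^n$ it projects blockwise onto $B(\zeros_d,R_\kappa)^n$, uses non-expansiveness of the projection to show the Laplacian penalty $\bx^\T\bL\bx$ does not increase, and uses the termwise inequality $f_i(x)>f_i(\zeros_d)$ for $x\notin B(\zeros_d,R_\kappa)$ together with convexity to show $F$ strictly decreases; hence the minimizer must lie in the product ball. You instead exploit first-order optimality at the maximal-norm block $z_m$: the Laplacian coupling satisfies $\langle \sum_j a_{m,j}(z_m-z_j),\,z_m\rangle\ge 0$ by Cauchy--Schwarz and maximality of $\norm{z_m}$, which forces $\langle \nabla f_m(z_m),z_m\rangle\le 0$, and then a single strong-convexity inequality at $m$ gives $\norm{z_m}\le\sqrt{\kappa}$. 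Your route is shorter, uses only one local function, and yields the slightly sharper constant $\sqrt{\kappa}$ in place of $1+\sqrt{\kappa}$; the paper's projection argument, on the other hand, is purely variational (it never writes the stationarity conditions) and would extend verbatim to any convex penalty that is nondecreasing under blockwise contraction toward the origin.
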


\begin{proof}
By \assumptionref{ass:x_i*}, we have $f_i(x_i^*) = 0$ and $x_i^* \in B(0,1)$. Since $f_i$ is $\beta$-smooth, this implies that
\begin{equation}
f_i(\zeros_d) \le \frac{\beta}{2}. \label{eq:f_i0upperbound}
\end{equation}
For any $x \notin B(\zeros_d, R_\kappa)$ we have $\norm{x - x_i^*} > R_\kappa - 1 = \sqrt{\kappa}$ since $\norm{x} > R_\kappa$. Then $\alpha$-strong convexity of $f_i$ implies that
\begin{equation}
f_i(x) > \frac{\alpha \kappa}{2} = \frac{\beta}{2}, \quad \text{ for all } x \notin B(\zeros_d, R_\kappa). \label{eq:f_ixlowerbound}
\end{equation}
Using this bound along with~\eqref{eq:f_i0upperbound}, we obtain that for any $x \notin B(\zeros_d, R_\kappa)$,
\begin{equation} \label{eq:f_ixnotinball}
f_i(x) > f_i(\zeros_d),
\end{equation}
and thus
\[
f(x) > \frac{n\beta}{2} \ge f(\zeros_d) \ge \min_{\xbar \in \R^d} f(\xbar).
\]
Therefore, $\arg\min_x f(x) \in B(\zeros_d, R_\kappa)$, which completes the proof of \eqref{eq:x*location}.

Next, consider a point $\bx = (x_1, x_2, \dots, x_n) \notin B(\zeros_d, R_\kappa)^n$, and let $\bx' = (x'_1, \dots, x'_n)$ denote the projection of $\bx$ onto $B(\zeros_d, R_\kappa)^n$; i.e.,
\[
x'_i \defeq \begin{cases} x_i & \text{ if } x_i \in B(\zeros_d, R_\kappa), \\ R_\kappa \frac{x_i}{\norm{x_i}} & \text{ otherwise.} \end{cases}
\]
We will show that $F_\rho(\bx)$ is larger than $F_\rho(\bx')$, which then implies the claim~\eqref{eq:bx*location}. Since a projection onto a convex set is non-expansive, we have $\norm{x'_i - x'_j} \le \norm{x_i - x_j}$ for all $i,j \in [n]$, and thus $(\bx')^\T \bL \bx' \le \bx^\T \bL \bx$. Moreover, there is at least one $i \in [n]$ such that $x'_i \ne x_i$ because
$\bx \notin B(\zeros_d, R_\kappa)^n$, and for every such $i$ we know that $x'_i$ is a convex combination of $x_i$ and $\zeros_d$; \ie, $x'_i = \theta x_i + (1 - \theta) \zeros_d$ for some $\theta \in (0,1)$. For those indices $i$ such that $x'_i \ne x_i$, we have $x_i \notin B(\zeros_d, R_\kappa)$, and so using convexity of $f_i$ and \eqref{eq:f_ixnotinball}, we get
\[
f_i(x'_i) \le \theta f_i(x_i) + (1 - \theta) f_i(\zeros_d) < f_i(x_i).
\]
Thus, for any $\bx \notin B(\zeros_d, R_\kappa)^n$, we have $F(\bx) > F(\bx')$, and therefore $F_\rho(\bx) > F_\rho(\bx') \ge \min_{\bxbar} F_\rho(\bxbar)$. Hence, $\arg\min_{\bx} F_\rho(\bx) \in B(\zeros_d, R_\kappa)^n$, \modif{which establishes \eqref{eq:bx*location}.} \qed
\end{proof}
\smallskip
The bound for $f$ is tight up to a constant factor even for $n=2$, as shown by the example \eqref{eq:ex_function_1}. The bound for $F_\rho$ should be tight for large $\rho$ as its minimizer approaches $1_n \otimes x^*$. For small $\rho$, $F_\rho$ is close to $F$ and its minimizer should be closer to $B(0_d,1)^n$. However, large values of $\rho$ is the relevant regime, as the goal of the minimization of $F_\rho$ is to find approximations of  $x^*$.

\section{Impact of Function change}\label{sec:function_change}

We now study to what extent the minimizer $x^* = \arg\min_x f(x)$ can change when the local objective $f_i$ at one agent changes. Considering a single change is justified if changes are sufficiently infrequent to only have to consider one at the time. Suppose we have $n+1$ functions $f_1, f_2, \dots, f_{n-1}, \fa, \fb \in \calF_{\alpha, \beta}$. Let $\xa$ and $\xb$ be defined as the minimizers,
\begin{align}
\xa &\defeq \arg\min_x \big( f_1(x) + f_2(x) + \dots + f_{n-1}(x) + \fa(x) \big) \nonumber\\
\xb &\defeq \arg\min_x \big( f_1(x) + f_2(x) + \dots + f_{n-1}(x) + \fb(x) \big). \label{eq:xab}
\end{align}
We wish to analyze $\norm{\xa - \xb}$. When $n=2$, $\xa$ and $\xb$ can be very different even if $\fa$ and $\fb$ are very similar. If $\fa = f_2$ as defined in \eqref{eq:ex_function_1}, then we have seen that the minimizer of $f_1 + \fa$ is $\prt{0,\frac{\kappa - 1}{2\sqrt{\kappa}}}$. Consider now \begin{align}\label{eq:ex_fb}
    \fb(x,y) &= 
    \frac{\beta}{2}((x+1) \cos \phi - y \sin \phi  )^2
    \\\nonumber &+
    \frac{\alpha}{2}((x + 1) \sin \phi + y \sin \phi  )^2 
    ,
\end{align}
with $\tan \phi = \sqrt{\kappa}$. Note $f_b$ can be obtained by applying a rotation of $2\phi= 2/\sqrt{\kappa}$ to $\fa$ around its minimizer, which is small if the condition number is large, see Figure \ref{fig:ex_functions}. Nevertheless, the minimum of $f_1+f_b$ is $(0,0)$, so changing $\fa$ to $\fb$ moves the minimizer by $\Omega(\sqrt\kappa)$.

A first bound on $\norm{\xa - \xb}$ in $O(\sqrt{\kappa})$ follows from Theorem \ref{thm:x*localization} as both $\xa$ and $\xb$ belong to $B(\zeros_d, R_\kappa)$. The next theorem presents stronger bounds exploiting $n$, as modifying a function clearly has a smaller potential impact when $n$ is large. Its proof is presented in the Appendix. 

\begin{theorem}\label{thm:bound_xa-xb}
For $\xa$ and $\xb$ defined in \eqref{eq:xab},
\[
\norm{\xa - \xb} \le \min\left( 4 + 4 \sqrt{\kappa}, \frac{4 \sqrt{\kappa} + 2\kappa}{\sqrt{n-1}}, \frac{4\kappa + 2\kappa^{3/2}}{n} \right).
\]
\end{theorem}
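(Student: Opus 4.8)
The plan is to separate the common part $g \defeq \sum_{i=1}^{n-1} f_i$ from the single function that changes, and to obtain each of the three terms in the minimum from a different mechanism, anchoring the estimates either at the minimizer $x_g^*$ of $g$ or at the points $\xa, \xb$ themselves. Two reusable facts underlie everything. First, Theorem~\ref{thm:x*localization} applied to the $n-1$ functions $f_1, \dots, f_{n-1}$, and separately to the two $n$-tuples defining $\xa$ and $\xb$, gives $x_g^*, \xa, \xb \in B(\zeros_d, R_\kappa)$, so each of these points has norm at most $1 + \sqrt\kappa$ (note $R_\kappa$ does not depend on $n$). Second, any $f \in \calF_{\alpha,\beta}$ has minimizer $x^*$ with $\norm{x^*}\le 1$ and value $f(x^*)=0$, so for $x \in B(\zeros_d, R_\kappa)$ the $\beta$-smoothness gives the gradient bound $\norm{\nabla f(x)} = \norm{\nabla f(x) - \nabla f(x^*)} \le \beta \norm{x - x^*} \le \beta(2 + \sqrt\kappa)$ and the value bound $f(x) \le \frac{\beta}{2}\norm{x-x^*}^2 \le \frac{\beta}{2}(2+\sqrt\kappa)^2$. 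The first term $4 + 4\sqrt\kappa$ is then immediate: since $\xa, \xb \in B(\zeros_d, R_\kappa)$, the triangle inequality gives $\norm{\xa - \xb} \le 2R_\kappa = 2 + 2\sqrt\kappa \le 4 + 4\sqrt\kappa$.

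The third term $\frac{4\kappa + 2\kappa^{3/2}}{n}$ comes from a first-order argument exploiting that $g + \fa$, being a sum of $n$ functions in $\calF_{\alpha,\beta}$, is $n\alpha$-strongly convex (the strong-convexity counting of Proposition~\ref{prop:f_properties}). Writing the optimality conditions $\nabla g(\xa) + \nabla \fa(\xa) = 0$ and $\nabla g(\xb) + \nabla \fb(\xb) = 0$, I would evaluate the gradient of $g + \fa$ at $\xb$: substituting $\nabla g(\xb) = -\nabla \fb(\xb)$ collapses it to $\nabla \fa(\xb) - \nabla \fb(\xb)$. Strong convexity of $g+\fa$ with minimizer $\xa$ then yields $n\alpha \norm{\xa - \xb} \le \norm{\nabla \fa(\xb) - \nabla \fb(\xb)}$, and applying the gradient bound above to $\fa$ and $\fb$ at the point $\xb$ caps the right-hand side by $2\beta(2+\sqrt\kappa)$. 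Dividing by $n\alpha$ and using $\kappa = \beta/\alpha$ gives $\norm{\xa-\xb} \le \frac{2\kappa(2+\sqrt\kappa)}{n}$, which is the claimed term.

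The second term $\frac{4\sqrt\kappa + 2\kappa}{\sqrt{n-1}}$ comes instead from a function-value argument anchored at $x_g^*$ and using that the common part $g$ \emph{alone} is $(n-1)\alpha$-strongly convex. Comparing $(g+\fa)(\xa) \le (g+\fa)(x_g^*)$ and discarding the nonnegative term $\fa(\xa)$ gives $g(\xa) - g(x_g^*) \le \fa(x_g^*)$, while strong convexity of $g$ gives $g(\xa) - g(x_g^*) \ge \frac{(n-1)\alpha}{2}\norm{\xa - x_g^*}^2$. Feeding in the value bound $\fa(x_g^*) \le \frac{\beta}{2}(2+\sqrt\kappa)^2$ (valid since $x_g^* \in B(\zeros_d, R_\kappa)$) yields $\norm{\xa - x_g^*} \le \frac{\sqrt\kappa(2+\sqrt\kappa)}{\sqrt{n-1}}$, with the identical bound for $\xb$; the triangle inequality through $x_g^*$ then produces $\frac{2\sqrt\kappa(2+\sqrt\kappa)}{\sqrt{n-1}}$.

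The part requiring care is recognizing that the three estimates need three genuinely different mechanisms, and in particular keeping the strong-convexity constants straight: the gradient argument must use the full sum $g+\fa$ ($n\alpha$-strongly convex) to get the $1/n$ scaling, whereas the value argument must isolate $g$ ($(n-1)\alpha$-strongly convex) to get the $1/\sqrt{n-1}$ scaling. The single genuinely non-routine step is the cancellation $\nabla g(\xb) = -\nabla \fb(\xb)$, which is what lets the bound in the third term depend only on the difference $\nabla \fa - \nabla \fb$ evaluated at the \emph{single} common point $\xb$, rather than on the two gradients separately; everything else is a mechanical assembly of the two reusable bounds with strong convexity.
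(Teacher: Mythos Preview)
Your proof is correct, and the second term is obtained exactly as in the paper. The first and third terms, however, are obtained by a slightly different route.

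The paper factors everything through the intermediate point $x^- \defeq \arg\min_x g(x)$ (your $x_g^*$): it proves a single auxiliary result (Theorem~\ref{thm:sensitivity}) bounding $\norm{x^* - x^-}$ for any $x^* = \arg\min_x(g(x)+f_n(x))$, then doubles each of the three resulting estimates via the triangle inequality $\norm{\xa-\xb}\le\norm{\xa-x^-}+\norm{\xb-x^-}$. In particular, the paper's third estimate $\norm{x^*-x^-}\le \frac{2\kappa+\kappa^{3/2}}{n}$ comes from a separate proposition (Proposition~\ref{prop:sensitivity-intermediate}) that bounds the displacement of the minimizer when a strongly convex function is perturbed by another, using the co-coercivity inequality for smooth strongly convex functions.

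Your argument for the third term is more direct: by substituting the optimality condition $\nabla g(\xb)=-\nabla\fb(\xb)$ into $\nabla(g+\fa)(\xb)$ you compare $\xa$ and $\xb$ without ever passing through $x^-$, and you avoid proving or invoking anything like Proposition~\ref{prop:sensitivity-intermediate}. This is shorter and self-contained; the paper's detour, on the other hand, yields the reusable sensitivity bound of Theorem~\ref{thm:sensitivity} as a by-product. Your first-term argument is likewise direct (and in fact gives $\norm{\xa-\xb}\le 2+2\sqrt\kappa$, tighter than the stated $4+4\sqrt\kappa$), whereas the paper again goes through $x^-$.
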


The tightness of Theorem~\ref{thm:bound_xa-xb} can be analyzed empirically using the PESTO toolbox \cite{taylor2017performance}, initially developed to compute the exact worst-case performances of optimization algorithms. This toolbox allows indeed solving exactly (up to numerical precision) optimization problems of the form $\max \norm{\xa - \xb}$ over all functions $f_1, f_2, \dots, f_{n-1}, \fa, \fb \in \calF_{\alpha, \beta}$ satisfying our assumptions, and $\xa,\xb$ defined as in \eqref{eq:xab}. It allows thus computing the worst possible value for $\norm{\xa - \xb}$ appearing in Theorem \ref{thm:bound_xa-xb}, for given $n$ and $\kappa$. Figure \ref{fig:evol_kappa} clearly indicates that this worst-case value evolves as $\sqrt{\kappa}$ for a given $n$. Similar tests show a decrease of the worst-case distance with $n$ for a given $\kappa$, but no clear expression of this decay was identified so far. This suggests the tightness of our bound for fixed $n$ and growing $\kappa$, but possible improvements in other regimes. 

\begin{figure}
    \centering
    \includegraphics[scale=.38]{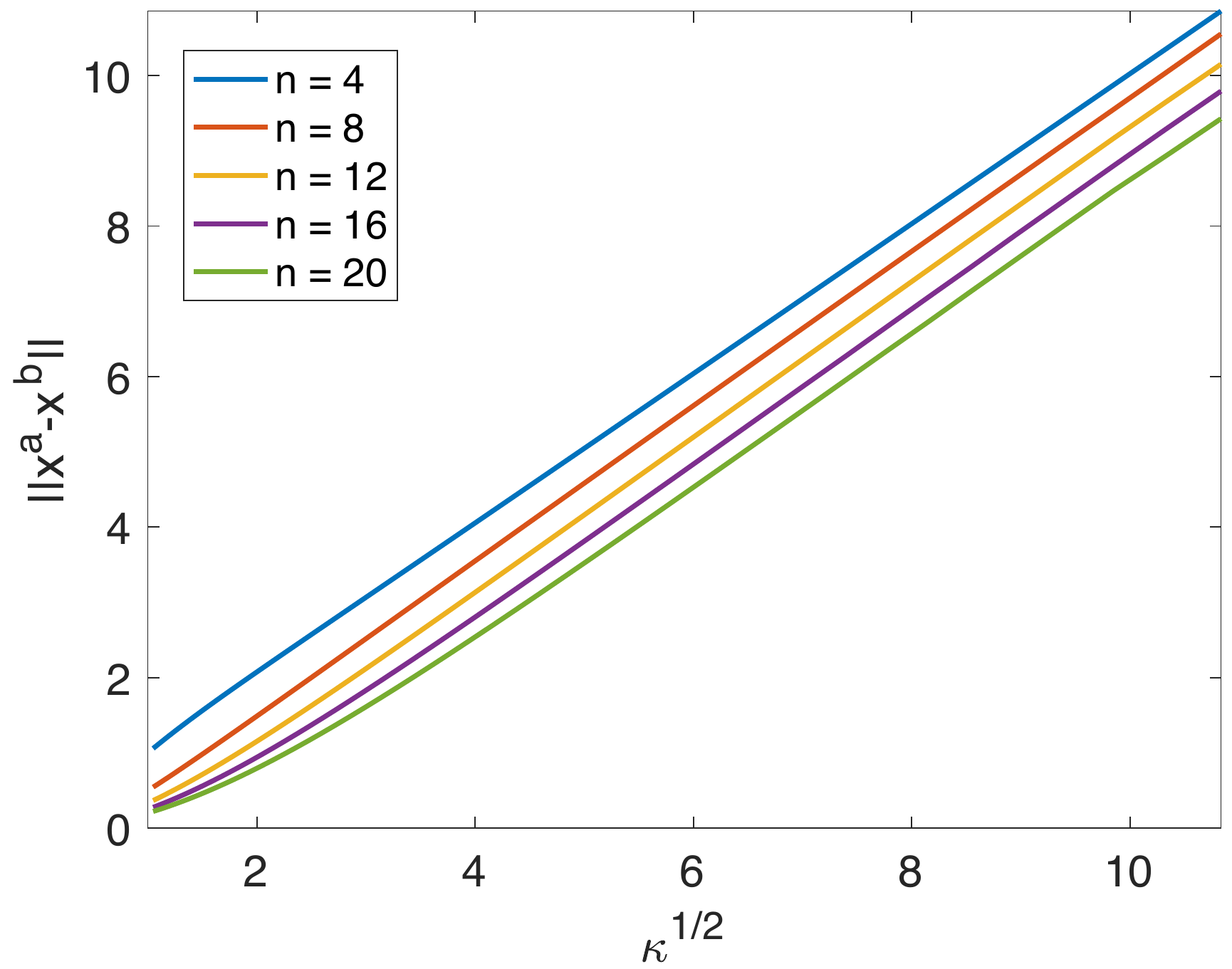}
    \caption{Evolution with $\sqrt{\kappa}$ of the \modif{exact worst-case bound on the distance} 
    $\norm{\xa-\xb}$ for $\xa,\xb$ defined in \eqref{eq:xab}. \modif{These bounds are computed numerically using the PESTO toolbox for performance estimation \cite{taylor2017performance}. Results suggest a clear asymptotic scaling in $\sqrt{\kappa}$ for fixed $n$.}
    }
    \label{fig:evol_kappa}
    
\end{figure}

\section{Stability of DGD}

We have seen in Section \ref{sec:function_change} that modifications of certain $f_i$ may result in large variations of the minimizer. If these happen during the execution of decentralized gradient descent (DGD), they can lead to sudden dramatic increases of the distance to the optimum. \modif{We now study} the stability of the DGD iteration \eqref{eq:algo} in this context, taking advantage of its formulation as a gradient descent on the objective $F_\rho(\bx)$. 

Recall the following standard result on gradient descent; see, \eg,~\cite{nesterov2003intro,bubeck2015convex}.

\begin{theorem} \label{thm:gradient-method-convergence}
Let $\fbar : \R^d \rightarrow R$ be a $\betabar$-smooth, $\alphabar$-strongly convex function minimized at $x^* = \arg\min_x \fbar(x)$. Let $\eta$ be a positive scalar which is at most $1/\betabar$. For any initial point $x^0 \in \R^d$, the sequence $\{x^k$, $k > 0\}$, produced by the gradient method $x^{k+1} = x^k - \eta \nabla \fbar(x^k)$ satisfies
\begin{align}
\norm{x^{k+1} - x^*}^2 &\le (1 - \eta \alphabar) \norm{x^{k} - x^*}^2 \label{eq:gradient-method-step}\\
&\le (1 - \eta \alphabar)^{k+1} \norm{x^0 - x^*}^2 . \nonumber
\end{align}
\end{theorem}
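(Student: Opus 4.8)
The plan is to establish the one-step contraction~\eqref{eq:gradient-method-step} first and then obtain the second inequality by a trivial induction on $k$. Since $\fbar$ is differentiable and $x^*$ minimizes it, the first-order optimality condition gives $\nabla\fbar(x^*) = \zeros_d$, which lets me write the error after one step as $x^{k+1} - x^* = (x^k - x^*) - \eta\prt{\nabla\fbar(x^k) - \nabla\fbar(x^*)}$. Expanding the squared norm then produces three terms: $\norm{x^k - x^*}^2$, a cross term $-2\eta\langle \nabla\fbar(x^k), x^k - x^*\rangle$, and the quadratic term $\eta^2\norm{\nabla\fbar(x^k)}^2$, where I use $\nabla\fbar(x^*)=\zeros_d$ to drop that gradient throughout.

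The two structural ingredients I would invoke are both standard consequences of $\betabar$-smoothness and $\alphabar$-strong convexity. First, $\betabar$-smoothness together with convexity gives co-coercivity of the gradient, $\langle \nabla\fbar(x^k), x^k - x^*\rangle \ge \frac{1}{\betabar}\norm{\nabla\fbar(x^k)}^2$. Second, $\alphabar$-strong convexity gives the monotonicity bound $\langle \nabla\fbar(x^k), x^k - x^*\rangle \ge \alphabar\norm{x^k - x^*}^2$. The key manoeuvre is to exploit the step-size restriction $\eta \le 1/\betabar$ inside the quadratic term: writing $\eta^2\norm{\nabla\fbar(x^k)}^2 = \eta\prt{\eta\norm{\nabla\fbar(x^k)}^2} \le \frac{\eta}{\betabar}\norm{\nabla\fbar(x^k)}^2$ and then applying co-coercivity bounds this above by $\eta\langle\nabla\fbar(x^k), x^k-x^*\rangle$. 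This absorbs half of the cross term, leaving $-\eta\langle\nabla\fbar(x^k), x^k - x^*\rangle$, which strong convexity in turn bounds above by $-\eta\alphabar\norm{x^k-x^*}^2$. Collecting the surviving terms yields exactly~\eqref{eq:gradient-method-step}.

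With the one-step bound in hand, the second inequality follows by applying it $k+1$ times, since the contraction factor $(1-\eta\alphabar)$ is a constant lying in $[0,1)$; indeed $\eta\alphabar \le \alphabar/\betabar \le 1$ because $\eta \le 1/\betabar$ and $\alphabar \le \betabar$. I do not expect a genuine obstacle here, as this is a classical estimate. The only point requiring care is the order in which co-coercivity, strong convexity, and the step-size condition are combined, so that the $\eta^2$ term is folded into the cross term rather than bounded on its own, which is what produces the clean factor $(1-\eta\alphabar)$ rather than a messier rate.
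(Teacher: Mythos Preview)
The paper does not actually prove this theorem; it is stated as a standard result with references to \cite{nesterov2003intro,bubeck2015convex} and then used as a black box. Your argument is correct and is one of the classical derivations: the expansion of $\norm{x^{k+1}-x^*}^2$, the use of co-coercivity $\langle\nabla\fbar(x^k),x^k-x^*\rangle\ge\betabar^{-1}\norm{\nabla\fbar(x^k)}^2$ together with $\eta\le 1/\betabar$ to absorb the $\eta^2$ term into half of the cross term, and then strong monotonicity to convert the remaining $-\eta\langle\nabla\fbar(x^k),x^k-x^*\rangle$ into $-\eta\alphabar\norm{x^k-x^*}^2$, is exactly the textbook route and yields~\eqref{eq:gradient-method-step} cleanly.
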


In addition to establishing linear convergence of the gradient method, this standard result ensures monotonic progress is made towards the minimizer at every step. Recall that applying the gradient method to the objective $F_\rho$ in \eqref{eq:def_Frho} leads to the decentralized algorithm \eqref{eq:algo}. Thus, the iterates $\bx^k$ \modif{of} \eqref{eq:algo} converge monotonically to $\bx^\rho$, the minimizer of $F_\rho$, provided $\eta < 1/(\beta + \rho\lambda_n)$ in view of Proposition \ref{prop:f_properties}.

If the local objective function $f_i$ at one or more agent may change during the execution of the optimization method, clearly convergence is no longer guaranteed, but we will show and quantify the stability of DGD under arbitrary changes of functions satisfying our assumptions. 
Specifically, we study the dynamics 
\begin{align} \label{eq:dynamic-algo}
x_i^{k+1} &= x_i^k - \eta\left(\nabla f^k_i(x_i^k) + \rho \sum_{j=1}^n a_{i,j} (x_i - x_j) \right),
\end{align}
where the $f_i^k$ can thus change at each iteration $k$. This can be seen as 
\modif{a gradient descent} on the time-varying objective
\begin{equation} \label{eq:dynamic-objective}
F^k_\rho(\bx) = F^k(\bx) + \frac{\rho}{2} \bx^\T \bL \bx,
\end{equation}
with $F^k(\bx) = \sum_{i=1}^n f_i^k(x_i)$. When $f_i^k \in \mathcal{F}_{\alpha, \beta}$ for all $i \in [n]$ and $k \ge 0$, we know from Theorem~\ref{thm:x*localization} that, for all $k \ge 0$, the minimizer of $F^k_\rho$ lies in the ball $B(\zeros_d, R_\kappa)^n$, where $R_\kappa = 1 + \sqrt{\kappa}$. We will \modif{show} that, even when every $f_i^k$ may vary from step to step, the dynamics \eqref{eq:dynamic-algo} cannot cause the sequence $\bx^k$ to grow unbounded if $f_i^k \in \mathcal{F}_{\alpha, \beta}$ for all $k \ge 0$. 

\begin{theorem} \label{thm:stability}
Consider the sequence $\bx^k$ generated by updates \eqref{eq:dynamic-algo} with possibly time-varying objective $F^k_\rho$ (equiv., $f_i^k$). Assume that $f_i^k \in \mathcal{F}_{\alpha, \beta}$ for all $i \in [n]$ and $k \ge 0$, and $\eta \leq 1 / (\beta + \rho \lambda_n)$.
Let $\kappa_\rho = \frac{\beta + \rho \lambda_n}{\alpha}$ and let $\kappa = \beta / \alpha$. There exists a finite positive constant $R$,
\begin{equation*}
R = \begin{cases} (1+\sqrt{6}) \modif{\sqrt{n}}(1 + \sqrt{\kappa}) & \text{ if } \kappa_\rho < 3 \\
(1 + \sqrt{2 \kappa_\rho}) \sqrt{n} (1 + \sqrt{\kappa}) & \text{ otherwise,} 
\end{cases}
\end{equation*}
such that if $\norm{\bx^{k_0}} \le R$ at some iteration $k_0$, then $\norm{\bx^k} \le R$ for all $k \ge k_0$.
\end{theorem}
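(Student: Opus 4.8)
The plan is to exploit, exactly as the setup suggests, that each iteration of \eqref{eq:dynamic-algo} is a single gradient step on $F^k_\rho$, which by Proposition~\ref{prop:f_properties}(ii) is $\alpha$-strongly convex and $(\beta+\rho\lambda_n)$-smooth; write $\beta_\rho := \beta + \rho\lambda_n$, so $\kappa_\rho = \beta_\rho/\alpha$. Denote by $\bxrhok$ the minimizer of $F^k_\rho$. By Theorem~\ref{thm:x*localization} (inclusion \eqref{eq:bx*location}) we have $\bxrhok \in B(\zeros_d, R_\kappa)^n$, so that $\norm{\bxrhok} \le \sqrt{n}\,R_\kappa = \sqrt{n}(1+\sqrt{\kappa})$; call this quantity $M$. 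Since the target $R$ in the statement is exactly $M$ times a factor depending only on $\kappa_\rho$ (and $R \ge M$), it suffices to show that the Euclidean ball of radius $R$ in $\R^{nd}$ is forward invariant under \eqref{eq:dynamic-algo}.

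First I would establish a one-step inequality of the form $\norm{\bx^{k+1}}^2 \le (1-\eta\alpha)\norm{\bx^k}^2 + (\text{drift})$, where the drift involves only $M$ (and $\eta,\alpha,\beta_\rho$) and, crucially, does not blow up as $\eta \to 0$. The key point is to measure the iterate from the origin rather than from the moving minimizer: the naive bound combining the contraction $\norm{\bx^{k+1}-\bxrhok} \le \sqrt{1-\eta\alpha}\,\norm{\bx^k-\bxrhok}$ of Theorem~\ref{thm:gradient-method-convergence} with the triangle inequality $\norm{\bx^{k+1}} \le \norm{\bx^{k+1}-\bxrhok} + M$ is too lossy, because for small $\eta$ the contraction factor tends to $1$ and the required radius diverges even though a tiny step clearly cannot leave the ball. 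Instead I would expand $\norm{\bx^{k+1}}^2 = \norm{\bx^k}^2 - 2\eta\langle \bx^k, \nabla F^k_\rho(\bx^k)\rangle + \eta^2\norm{\nabla F^k_\rho(\bx^k)}^2$, split $\langle \bx^k, \nabla F^k_\rho(\bx^k)\rangle = \langle \bx^k-\bxrhok, \nabla F^k_\rho(\bx^k)\rangle + \langle \bxrhok, \nabla F^k_\rho(\bx^k)\rangle$, bound the first inner product below by strong convexity together with co-coercivity (so as to retain a negative multiple of $\norm{\nabla F^k_\rho(\bx^k)}^2$), and bound the second by Cauchy--Schwarz and $\beta_\rho$-smoothness, using $\norm{\bxrhok} \le M$ and $\nabla F^k_\rho(\bxrhok)=\zeros$.

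Given such an inequality, forward invariance follows by evaluating it on the boundary $\norm{\bx^k}=R$: using $\norm{\bx^k-\bxrhok} \ge R-M$, the negative strong-convexity contribution (a positive multiple of $\eta\norm{\bx^k-\bxrhok}^2$) must dominate the positive drift, which pins down $R$ as essentially the fixed point $R = (1+\Theta(\sqrt{\kappa_\rho}))\,M$ of the recursion. The constant $1+\sqrt{2\kappa_\rho}$ and the factor $\sqrt{n}(1+\sqrt{\kappa})$ then fall out, while the separate regime $\kappa_\rho<3$ handles small conditioning with the fixed constant $1+\sqrt{6}$ (the two expressions agreeing at $\kappa_\rho=3$).

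I expect the main obstacle to be making the one-step bound hold uniformly over all admissible step sizes $\eta \in (0, 1/\beta_\rho]$. The difficulty is the quadratic term $\eta^2\norm{\nabla F^k_\rho(\bx^k)}^2$: bounding it crudely by $\eta^2\beta_\rho^2\norm{\bx^k-\bxrhok}^2$ lets it overwhelm the $O(\eta\alpha)$ gain from strong convexity when $\kappa_\rho$ is large and $\eta$ is near $1/\beta_\rho$, which is precisely why the full co-coercivity inequality (keeping the term $-\tfrac{2\eta}{\alpha+\beta_\rho}\norm{\nabla F^k_\rho(\bx^k)}^2$) is needed and where the case distinction on $\kappa_\rho$ originates. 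Everything else — the reduction to a gradient step, the localization of $\bxrhok$ via \eqref{eq:bx*location}, and the passage from the one-step estimate to invariance — is routine.
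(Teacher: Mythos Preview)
Your plan is essentially the paper's: interpret \eqref{eq:dynamic-algo} as a gradient step on $F^k_\rho$, localize $\bxrhok$ inside $B(\zeros_d,R_\kappa)^n$ via Theorem~\ref{thm:x*localization} to get $\norm{\bxrhok}\le M:=\sqrt{n}(1+\sqrt{\kappa})$, expand $\norm{\bx^{k+1}}^2$, and control the cross term with the full co-coercivity inequality plus Cauchy--Schwarz on the part carrying $\bxrhok$. Two small devices in the paper sidestep exactly the difficulty you anticipate and tighten your boundary argument. First, instead of carrying a general $\eta\in(0,1/\beta_\rho]$ through the algebra, the paper observes that $x-\eta\nabla\fbar(x)$ is a convex combination of $x$ and $x-(1/\beta_\rho)\nabla\fbar(x)$, so it suffices to treat $\eta=1/\beta_\rho$ (Lemma~\ref{lem:eta=1/beta}); this makes the quadratic in $\norm{\nabla F^k_\rho(\bx^k)}$ clean and is why the constants come out exactly as stated. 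Second, checking only the boundary $\norm{\bx^k}=R$ is not enough for a discrete map: the paper splits into $\norm{\bx^k}\le M$ (where the plain contraction of Theorem~\ref{thm:gradient-method-convergence} already yields $\norm{\bx^{k+1}}\le 3M\le R$, Proposition~\ref{prop:stability-small-x}) and $\norm{\bx^k}\ge M$ (where the resulting upper bound on $\norm{\bx^{k+1}}^2$ is convex in $\norm{\bx^k}$, so its maximum over $[0,R]$ is attained at an endpoint). The regime $\kappa_\rho<3$ is handled by artificially lowering $\alpha$ to force $\kappa_\rho=3$, which is where the constant $1+\sqrt{6}$ comes from.
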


A minor extension of our result not presented here shows the iterates $\bx^{k}$ approach $B(0,R)$ if $\norm{\bx^{k_0}}\geq R$. Moreover, since the proof is based on the analysis of a single iteration, Theorem \ref{thm:stability} can directly be extended to time-varying network matrices with uniformly bounded $\lambda_n$. Finally, observe that the bound $R$ scales with the square root of the condition number $\kappa_\rho$ of $F_\rho$, which itself grows linearly with  the penalty parameter $\rho$. This quantity can also be shown to determine the accuracy of the points to which each agent $i$ converges. Hence our result reveals a potential trade-off between accuracy and stability.

\subsection{Intermediate Results on Gradient Descent}

The proof of Theorem~\ref{thm:stability} builds on an intermediate result in a more general setting: Let $\fbar: \R^d \rightarrow \R$ be an $\alphabar$-strongly convex and $\betabar$-smooth function, and let $\kappabar = \betabar / \alphabar$. Let $x^* = \arg\min_x \fbar(x)$ denote the minimizer of $\fbar$, and suppose that there exists a finite positive scalar $b$ such that $\norm{x^*} \le b$. Let $\eta$ be a non-negative scalar that is at most $1/\betabar$. For any initial point $x \in \R^d$, let $x^+ = x - \eta \nabla \fbar(x)$. We are interested in finding a constant $R$ such that if $\norm{x} \le R$ then $\norm{x^+} \le R$, i.e. the radius of a ball centered on 0 stable under an iteration of gradient descent on $\bar f$.

The next lemma allows us to simplify our focus to the case where $\eta = 1 / \betabar$.

\begin{lemma} \label{lem:eta=1/beta}
Let $R$ be a finite positive scalar. If $\norm{x} \le R$ and $\norm{x - (1/\betabar)\nabla \fbar(x)} \le R$, then $\norm{x - \eta \nabla \fbar(x)}\le R$ for any $\eta$ satisfying $0 \le \eta \le 1/\betabar$.
\end{lemma}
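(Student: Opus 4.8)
The plan is to exploit the fact that, for a fixed point $x$, the gradient-descent iterate depends \emph{affinely} on the step size. Define $g(\eta) \defeq x - \eta \nabla \fbar(x)$, viewed as a function of the scalar $\eta$; since $\nabla\fbar(x)$ is a fixed vector, $g$ is affine in $\eta$. The two hypotheses state precisely that the endpoints of the interval $[0, 1/\betabar]$ are mapped into the closed ball $B(\zeros_d, R)$: indeed $g(0) = x$ with $\norm{g(0)} = \norm{x} \le R$, and $g(1/\betabar) = x - (1/\betabar)\nabla\fbar(x)$ with $\norm{g(1/\betabar)} \le R$.

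First I would reparametrize any $\eta \in [0, 1/\betabar]$ as $\eta = t/\betabar$ with $t \defeq \eta\betabar \in [0,1]$, and verify the identity $g(\eta) = (1-t)\,g(0) + t\,g(1/\betabar)$, which is immediate from the affineness of $g$. This exhibits the intermediate iterate $g(\eta)$ as a convex combination of the two endpoints.

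The conclusion then follows from convexity of the closed ball $B(\zeros_d, R)$, equivalently from the triangle inequality applied to a norm: since both endpoints lie in $B(\zeros_d, R)$, so does every convex combination of them, and in particular $\norm{g(\eta)} \le (1-t)\norm{g(0)} + t\,\norm{g(1/\betabar)} \le (1-t)R + tR = R$, which is the desired bound.

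I expect no substantial obstacle here: the whole argument rests on recognizing that the iterate is affine in $\eta$ and that norms (equivalently, balls centered at the origin) are convex. The strong convexity and smoothness of $\fbar$ play no direct role in this lemma; they enter only the surrounding analysis that will be used to certify the two hypotheses $\norm{x}\le R$ and $\norm{x - (1/\betabar)\nabla\fbar(x)}\le R$, after which this lemma lets us restrict attention to the single step size $\eta = 1/\betabar$.
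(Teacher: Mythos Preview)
Your proposal is correct and matches the paper's own proof essentially line for line: the paper also observes that $x$ and $x - (1/\betabar)\nabla\fbar(x)$ lie in the convex set $B(\zeros_d,R)$, notes that $x - \eta\nabla\fbar(x)$ is a convex combination of these two points for $\eta\in[0,1/\betabar]$, and concludes by convexity. Your reparametrization $t=\eta\betabar$ and the explicit triangle-inequality estimate are just a slightly more detailed rendering of the same argument.
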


\begin{proof}
By hypothesis, 
$x$ and $x - (1/\betabar)\nabla \fbar(x)$ are 
in the convex set $B(\zeros_d, R)$. 
Now, 
for $0 < \eta < 1/\betabar$, the point $x - \eta \nabla \fbar(x)$ is a convex combination of $x$ and $x - (1/\betabar)\nabla \fbar(x)$, and hence also belongs to $B(\zeros_d, R)$ by convexity.
\qed
\end{proof}
\smallskip

The next proposition provides a stability result provided $x$ is large enough. 
\begin{proposition} \label{prop:stability-large-kappa}
Assume  $\norm{x} \ge b$ and $\kappabar \ge 3$, and let
\begin{equation} \label{eq:stability-radius}
R = (1 + \sqrt{2 \kappabar})b.
\end{equation}
If $\norm{x} \le R$, then $\norm{x^+} \le R$.
\end{proposition}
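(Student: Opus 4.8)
The plan is to reduce to the step size $\eta = 1/\betabar$ and then expand $\norm{x^+}^2$ directly, deliberately avoiding the triangle inequality. First, by Lemma~\ref{lem:eta=1/beta} it suffices to establish invariance of $B(\zeros_d, R)$ for $\eta = 1/\betabar$, since invariance for that step size implies it for every smaller one. So I set $x^+ = x - \frac{1}{\betabar}\nabla\fbar(x)$ and expand
\[
\norm{x^+}^2 = \norm{x}^2 - \frac{2}{\betabar}\langle \nabla\fbar(x), x\rangle + \frac{1}{\betabar^2}\norm{\nabla\fbar(x)}^2 .
\]
The crucial observation is that bounding $\norm{x^+}\le \norm{x^+ - x^*} + \norm{x^*}$ and invoking the contraction estimate of Theorem~\ref{thm:gradient-method-convergence} is far too lossy: it only forces $R$ to scale like $\kappabar\, b$, whereas the claimed bound scales like $\sqrt{\kappabar}\, b$. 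One must instead keep the exact expansion above and exploit a sharp first-order inequality.

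Next I would split $\langle\nabla\fbar(x),x\rangle = \langle\nabla\fbar(x), x - x^*\rangle + \langle\nabla\fbar(x), x^*\rangle$. For the first term I use the standard interpolation inequality for an $\alphabar$-strongly convex, $\betabar$-smooth function (recalling $\nabla\fbar(x^*)=\zeros_d$),
\[
\langle\nabla\fbar(x), x - x^*\rangle \ge \tfrac{\alphabar\betabar}{\alphabar+\betabar}\norm{x-x^*}^2 + \tfrac{1}{\alphabar+\betabar}\norm{\nabla\fbar(x)}^2 ,
\]
and for the second I apply Cauchy--Schwarz together with $\norm{x^*}\le b$, giving $\langle\nabla\fbar(x),x^*\rangle \ge -b\norm{\nabla\fbar(x)}$. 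Writing $G = \norm{\nabla\fbar(x)}$ and $\Delta = \norm{x-x^*}$ and substituting, the two $G^2$ contributions combine with a net negative coefficient, leaving an upper bound of the form
\[
\norm{x^+}^2 \le \norm{x}^2 - \tfrac{2\alphabar}{\alphabar+\betabar}\Delta^2 - C\, G^2 + \tfrac{2b}{\betabar}\, G, \qquad C = \tfrac{\betabar-\alphabar}{\betabar^2(\alphabar+\betabar)} > 0 .
\]

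The gradient-norm term is then eliminated for free by completing the square: $-C G^2 + \frac{2b}{\betabar}G \le \frac{b^2(\alphabar+\betabar)}{\betabar-\alphabar} = \frac{\kappabar+1}{\kappabar-1}b^2$, valid for all $G$ with no constraint needed. For the remaining terms I use the two hypotheses directly. Since $\norm{x^*}\le b$ we have $\Delta \ge \norm{x}-b$, and since $\norm{x}\ge b$ this lower bound is nonnegative, so $-\frac{2\alphabar}{\alphabar+\betabar}\Delta^2 \le -\frac{2\alphabar}{\alphabar+\betabar}(\norm{x}-b)^2$. One checks that $\norm{x}^2 - \frac{2\alphabar}{\alphabar+\betabar}(\norm{x}-b)^2$ is increasing in $\norm{x}$, hence maximized at $\norm{x}=R$. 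Substituting $R - b = \sqrt{2\kappabar}\,b$, i.e. $(R-b)^2 = 2\kappabar\, b^2$, collapses everything to the scalar estimate
\[
\norm{x^+}^2 \le R^2 - \tfrac{4\kappabar}{\kappabar+1}b^2 + \tfrac{\kappabar+1}{\kappabar-1}b^2 .
\]

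It then remains to verify the purely scalar inequality $\frac{\kappabar+1}{\kappabar-1} \le \frac{4\kappabar}{\kappabar+1}$, equivalently $(\kappabar+1)^2 \le 4\kappabar(\kappabar-1)$, equivalently $3\kappabar^2 - 6\kappabar - 1 \ge 0$; its positive root is $\approx 2.15$, so it holds for all $\kappabar \ge 3$, which is exactly the standing hypothesis. I expect the main obstacle to be conceptual rather than computational: recognizing that the contraction/triangle-inequality route cannot recover the $\sqrt{\kappabar}$ scaling, and that the sharp interpolation inequality must be used to absorb the gradient-norm term via completing the square. Once that is in place the bookkeeping is routine, and the roles of the two hypotheses become transparent: $\norm{x}\ge b$ guarantees $\Delta\ge\norm{x}-b\ge 0$ so that the $\Delta^2$ lower bound is usable, while $\kappabar\ge3$ is precisely the threshold making the final scalar inequality hold.
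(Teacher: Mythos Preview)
Your proof is correct and follows essentially the same route as the paper: reduce to $\eta=1/\betabar$ via Lemma~\ref{lem:eta=1/beta}, expand $\norm{x^+}^2$, split $\langle\nabla\fbar(x),x\rangle$ and apply the sharp co-coercivity inequality together with Cauchy--Schwarz on the $x^*$ term, complete the square to eliminate $\norm{\nabla\fbar(x)}$, and then use $\Delta\ge\norm{x}-b\ge 0$ to reduce to a scalar check in $\norm{x}$. The only cosmetic differences are that the paper coarsens the constants earlier (replacing $\tfrac{\kappabar+1}{\kappabar-1}$ by $2$ and $\tfrac{2}{\kappabar+1}$ by $\kappabar^{-1}$ once $\kappabar\ge3$), which makes the endpoint identity $Q(R)=R^2$ exact, and it argues via convexity of the resulting quadratic rather than monotonicity; your version keeps the exact coefficients and closes with the equivalent scalar inequality $3\kappabar^{2}-6\kappabar-1\ge0$.
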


\begin{proof}
By standard conditions for smoothness and strong convexity of $\fbar$, see e.g.~\cite{nesterov2003intro}, we have for any $x$, 
\begin{align*}
\nabla \fbar(x)^\T (x - x^*) &\ge 
\frac{\betabar^{-1}\norm{\nabla \fbar(x)}^2 }{1 + \kappabar^{-1}} + \frac{\alphabar\norm{x - x^*}^2}{1 + \kappabar^{-1}} .
\end{align*}
Adding $\nabla \fbar(x)^\T x^*$ to both sides, applying the Cauchy-Schwartz inequality, and recalling that $\norm{x^*} \le b$ by assumption gives
\begin{align}
\nabla \fbar(x)^\T x &\ge \nabla \fbar(x)^\T x^* + \frac{\betabar^{-1}\norm{\nabla \fbar(x)}^2}{1 + \kappabar^{-1}}  + \frac{\alphabar \norm{x - x^*}^2}{1 + \kappabar^{-1}} \nonumber \\
&\ge - b \norm{\nabla \fbar(x)} + \frac{\betabar^{-1} \norm{\nabla \fbar(x)}^2 }{1 + \kappabar^{-1}}+ \frac{\alphabar\norm{x - x^*}^2}{1 + \kappabar^{-1}} . \label{eq:h-smooth-strongly-convex}
\end{align}
We focus on the case $\eta = 1 / \betabar$. By definition of $x^+$,
\begin{align*}
\norm{x^+}^2 &= \norm{x - \frac{1}{\betabar} \nabla \fbar(x)}^2 \\
&= \norm{x}^2 + \frac{1}{\betabar^2} \norm{\nabla \fbar(x)}^2 - 2 \frac{1}{\betabar} \nabla \fbar(x)^\T x.
\end{align*}
Using \eqref{eq:h-smooth-strongly-convex}, we can then bound $\norm{x^+}^2$ 
\begin{align}
&\norm{x^+}^2 \leq \norm{x}^2 + \frac{1}{\betabar^2} \norm{\nabla \fbar(x)}^2 \nonumber \\ 
&+\frac{2}{\betabar}\left(b \norm{\nabla \fbar(x)} - \frac{\betabar^{-1}\norm{\nabla \fbar(x)}^2}{1 + \kappabar^{-1}}  - \frac{\alphabar\norm{x - x^*}^2 }{1 + \kappabar^{-1}} \right) \nonumber \\
&= \norm{x}^2 + \frac{2}{\betabar} \left(b \norm{\nabla \fbar(x)} - \frac{1}{2\betabar} \left(\frac{1 - \kappabar^{-1}}{1 + \kappabar^{-1}}\right)\norm{\nabla \fbar(x)}^2 \right.\nonumber 
\\& \qquad \qquad \left.  - \frac{\alphabar}{1 + \kappabar^{-1}}\norm{x - x^*}^2 \right). \label{eq:intermediate-quadratic}
\end{align}
Since $\kappabar \ge 3$ by assumption, we have 
$\frac{1 - \kappabar^{-1}}{1 + \kappabar^{-1}} > 0 $.
Thus
\[
b \norm{\nabla \fbar(x)} - \frac{1}{2\betabar} \left(\frac{1 - \kappabar^{-1}}{1 + \kappabar^{-1}}\right)\norm{\nabla \fbar(x)}^2 - \frac{\alphabar\norm{x - x^*}^2}{1 + \kappabar^{-1}}
\]
is strictly concave in $\norm{\nabla \fbar(x)}$, and hence can be bounded by its maximum (w.r.t. to $\norm{\nabla \fbar(x)}$)
\[
\frac{b^2 \betabar}{2} \left(\frac{1 + \kappabar^{-1}}{1 - \kappabar^{-1}}\right) - \frac{\alphabar}{1 + \kappabar^{-1}}\norm{x - x^*}^2.
\]
Using this in \eqref{eq:intermediate-quadratic} gives that
\begin{align*}
\norm{x^+}^2 &\le \norm{x}^2 + b^2 \frac{1 + \kappabar^{-1}}{1 - \kappabar^{-1}} - 2 \frac{\kappabar^{-1}}{1 + \kappabar^{-1}}\norm{x - x^*}^2 \\
&\le \norm{x}^2 + b^2 \frac{1 + \kappabar^{-1}}{1 - \kappabar^{-1}} - 2 \frac{\kappabar^{-1}}{1 + \kappabar^{-1}}(\norm{x} - \norm{x^*})^2.
\end{align*}
For $\kappabar \ge 3$, we have $\frac{1 + \kappabar^{-1}}{1 - \kappabar^{-1}} \le 2$ and $-\frac{2\kappabar^{-1}}{1 + \kappabar^{-1}} \le - \kappabar^{-1}$. Furthermore, since $\norm{x} \ge b$, we have $\norm{x} - b \ge 0$. 
\begin{align*}
Q(\norm{x}) &= \norm{x}^2 + 2 b^2 - \kappa^{-1}(\norm{x} - b)^2.
\end{align*}
Now observe that $Q(\norm{x})$ is strictly convex in $\norm{x}$ since $1 - \kappabar^{-1} > 0$. Therefore, under the assumption that $0 \le \norm{x} \le R$, $Q(\norm{x})$ achieves its maximum when either $\norm{x} = 0$ or $\norm{x} = R$. Observe that 
\[
Q\left((1 + \sqrt{2 \kappa})b\right) = (1 + \sqrt{2 \kappa})^2 b^2 = R^2 
\]
and, for $\kappabar \ge 3$,
\[
Q(0) = (2 - \kappabar^{-1}) b^2 < (1+\sqrt{2\kappabar})^2 b^2= R^2,
\]
Therefore, if $\norm{x}^2 \le R^2$ and $\eta = 1 / \betabar$, then $\norm{x^+}^2 \le R^2$ too. It follows from Lemma~\ref{lem:eta=1/beta} that $\norm{x^+}^2 \le R^2$ for any $\eta \in [0, 1/\betabar]$, which completes the proof.\qed
\end{proof}
\smallskip
The previous proposition requires $\norm{x}\geq b$ and a condition number $\kappabar= \betabar/\alphabar \geq 3$. We will see that smaller condition numbers can easily be treated.
To complete our analysis, we just need thus to focus on $x$ with norms smaller than the bound $b$ on $x^*$. 

\begin{proposition}  \label{prop:stability-small-x}
If $\norm{x} \le b$ then $\norm{x^+} \le 3b$.
\end{proposition}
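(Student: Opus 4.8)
The plan is to bound $\norm{x^+}$ by comparing $x^+$ to the minimizer $x^*$ and invoking the non-expansiveness of the gradient step already established in Theorem~\ref{thm:gradient-method-convergence}. The key observation is that, in contrast to Proposition~\ref{prop:stability-large-kappa}, we do not need any lower bound on $\norm{x}$ or any condition on $\kappabar$ here: when $\norm{x} \le b$, the point $x$ is already close to the minimizer (which also satisfies $\norm{x^*} \le b$), so a single gradient step cannot move us far.

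First I would recall that, since $\eta \le 1/\betabar$ and $\fbar$ is $\alphabar$-strongly convex and $\betabar$-smooth, Theorem~\ref{thm:gradient-method-convergence} applied with $\alphabar, \betabar$ in place of $\alpha, \beta$ gives the one-step contraction
\[
\norm{x^+ - x^*}^2 \le (1 - \eta \alphabar)\norm{x - x^*}^2 \le \norm{x - x^*}^2,
\]
where the second inequality uses $1 - \eta\alphabar \le 1$. Hence the gradient step is non-expansive toward $x^*$: $\norm{x^+ - x^*} \le \norm{x - x^*}$.

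Next I would control $\norm{x - x^*}$ by the triangle inequality together with the two norm bounds in hand, namely $\norm{x} \le b$ by hypothesis and $\norm{x^*} \le b$ by assumption on the minimizer:
\[
\norm{x - x^*} \le \norm{x} + \norm{x^*} \le 2b.
\]
Combining with the contraction gives $\norm{x^+ - x^*} \le 2b$. A final application of the triangle inequality then yields
\[
\norm{x^+} \le \norm{x^+ - x^*} + \norm{x^*} \le 2b + b = 3b,
\]
which is the claim.

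I do not expect any genuine obstacle in this argument, as it is purely a chain of triangle inequalities anchored by the monotonicity of gradient descent. The only point requiring care is to make sure the hypotheses of Theorem~\ref{thm:gradient-method-convergence} are met (strong convexity, smoothness, and $\eta \le 1/\betabar$), so that the non-expansiveness step is legitimate; everything else is elementary.
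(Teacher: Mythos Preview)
Your proof is correct and follows essentially the same argument as the paper: non-expansiveness of the gradient step toward $x^*$ combined with two applications of the triangle inequality. The only cosmetic difference is that the paper first treats the case $\eta = 1/\betabar$ and then invokes Lemma~\ref{lem:eta=1/beta} to cover smaller step sizes, whereas you apply Theorem~\ref{thm:gradient-method-convergence} directly for all $\eta \in (0,1/\betabar]$, which is slightly more streamlined.
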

\begin{proof}
We focus again first on the case 
$\eta = 1/\betabar$. Using Theorem \ref{thm:gradient-method-convergence}, we have 
$$\norm{x^+ - x^*}\leq \sqrt{1- \kappa^{-1}}\norm{x - x^*}$$
Using  $\norm{x} \le b$ and $\norm{x^*}\le b$, which implies $\norm{x - x^*}\le 2b$ we have then
$$
\norm{x^+} \le \norm{x^+ - x^*} + \norm{x^*} \le \norm{x - x^*} + \norm{x^*} \leq 3b.
$$
The result for other values of $\eta \in [0, 1/\betabar]$ follows from Lemma~\ref{lem:eta=1/beta}.\qed
\end{proof}
\smallskip

\modif{We combine} the results above in the following Proposition. 
\begin{proposition}\label{prop:stab_GD}
Let \[
R \defeq \begin{cases} (1 + \sqrt{2 \kappabar})b & \text{ if } \kappabar \geq 3, \\ (1 + \sqrt{6})b& \text{ otherwise.} \end{cases}
\]
If $\norm{x}\leq R$ then $\norm{x^+}\leq R$.
\end{proposition}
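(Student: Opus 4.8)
The plan is to assemble Proposition~\ref{prop:stab_GD} from the two preceding propositions by a case analysis on $\norm{x}$ relative to $b$, handling the two regimes of $\kappabar$ separately. The only genuinely new idea required is a device to bring the sub-threshold case $\kappabar < 3$ within reach of Proposition~\ref{prop:stability-large-kappa}, which was stated only for $\kappabar \ge 3$.

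First I would dispose of the small-norm regime $\norm{x} \le b$ uniformly. Proposition~\ref{prop:stability-small-x} gives $\norm{x^+} \le 3b$ with no restriction on $\kappabar$, so it suffices to verify $3b \le R$ in each branch of the definition of $R$. When $\kappabar \ge 3$ we have $R = \prt{1 + \sqrt{2\kappabar}}b \ge \prt{1 + \sqrt{6}}b > 3b$; when $\kappabar < 3$ we have $R = \prt{1+\sqrt{6}}b > 3b$ since $\sqrt{6} > 2$. Either way $\norm{x^+} \le 3b \le R$, so this regime is settled.

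It remains to treat $\norm{x} \ge b$. If $\kappabar \ge 3$, Proposition~\ref{prop:stability-large-kappa} applies verbatim with $R = \prt{1+\sqrt{2\kappabar}}b$ and yields $\norm{x^+} \le R$. The interesting case is $\kappabar < 3$. Here I would exploit the elementary fact that an $\alphabar$-strongly convex function is also $\alphabar'$-strongly convex for every $\alphabar' \le \alphabar$: writing $\fbar(x) - \frac{\alphabar'}{2}\norm{x}^2 = \prt{\fbar(x) - \frac{\alphabar}{2}\norm{x}^2} + \frac{\alphabar - \alphabar'}{2}\norm{x}^2$ exhibits it as a sum of two convex functions. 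Choosing $\alphabar' = \betabar/3$, which is at most $\alphabar$ precisely because $\kappabar = \betabar/\alphabar < 3$, the function $\fbar$ is $\alphabar'$-strongly convex and $\betabar$-smooth with effective condition number exactly $3$. The minimizer $x^*$, the bound $b$, and the step-size constraint $\eta \le 1/\betabar$ are all unaffected by this reinterpretation, so Proposition~\ref{prop:stability-large-kappa} applies with condition number $3$, giving stability radius $\prt{1+\sqrt{2\cdot 3}}b = \prt{1+\sqrt{6}}b = R$ and hence $\norm{x^+} \le R$.

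The main obstacle is conceptual rather than computational: recognizing that Proposition~\ref{prop:stability-large-kappa} can be reused for $\kappabar < 3$ by deliberately under-estimating the strong-convexity parameter down to $\betabar/3$, thereby saturating the threshold $\kappabar = 3$ while leaving smoothness and the minimizer untouched. Once that relaxation is in place, every quantitative bound follows directly from the two earlier propositions together with the elementary numerical checks $3 \le 1 + \sqrt{2\kappabar}$ (valid already for $\kappabar \ge 2$) and $3 < 1 + \sqrt{6}$.
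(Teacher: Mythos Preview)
Your proposal is correct and follows essentially the same approach as the paper: both combine Proposition~\ref{prop:stability-small-x} for $\norm{x}\le b$, Proposition~\ref{prop:stability-large-kappa} for $\norm{x}\ge b$ when $\kappabar\ge 3$, and the artificial reduction of $\alphabar$ to $\betabar/3$ to raise the effective condition number to $3$ when $\kappabar<3$. The only difference is organizational---you split first on $\norm{x}$ and then on $\kappabar$, whereas the paper splits first on $\kappabar$ and then on $\norm{x}$---and this does not affect the substance of the argument.
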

\begin{proof}
Suppose first that $\kappabar \geq 3$. If $\norm{x}\geq b$, the result directly follows from Proposition \ref{prop:stability-large-kappa}. If $\norm{x} < b$, Proposition \ref{prop:stability-small-x} and $\kappabar \geq 3$ imply that
$$
\norm{x^+} \le 3b \leq (1+\sqrt{6}) b\leq \modif{(1+\sqrt{2\kappabar})} b.
$$
Smaller condition numbers $\kappabar=\betabar/\alphabar$ can be artificially increased to $\kappabar'=3$ e.g. by considering a lower $\alphabar' = \betabar'/3 < \alphabar$. An $\alphabar$-strongly convex function is indeed always $\alphabar'$-strongly convex for any positive $\alphabar'<\alphabar$. The first part of the result can then be applied with $R= (1 + \sqrt{2 \kappabar'})b = (1 + \sqrt{6})b $. \qed
\end{proof}

\subsection{Proof of Theorem \ref{thm:stability}}

\begin{proof}
The proof essentially follows by specializing the result of Proposition \ref{prop:stab_GD} to the setting of~\eqref{eq:dynamic-algo} and \eqref{eq:dynamic-objective}. By Theorem~\ref{thm:x*localization} we know that
\[
\arg\min_{\bx} F_\rho^k(\bx) \in B(\zeros_d, 1 + \sqrt{\kappa})^n, \quad \text{ for all } k \ge 0,
\]
where $\kappa = \beta / \alpha$, and so we have $\norm{\arg\min_{\bx} F_\rho^k(\bx) } \le b:= \sqrt{n}(1 + \sqrt{\kappa}) $. In addition, it follows from Proposition \ref{prop:f_properties} that the condition number of $F_\rho^k$ is $\kappa_\rho = (\beta + \rho\lambda_n)/\alpha$ for all $k \ge 0$. The result follows then 
from Proposition \ref{prop:stab_GD}. \qed

\end{proof}

\section{Conclusion}

We have shown that DGD is stable: even if all agents change their objectives at every iteration, if the local per-agent objectives are constrained to be smooth, strongly convex, and their minimizer has bounded norm, then the iterates of DGD cannot be made to grow unbounded. 

This paper contributes an initial exploration into the behavior of DGD, and consensus optimization methods in general, in the setting of open multi-agent systems. There are many interesting directions for future work, such as understanding the behavior of more advanced algorithms (e.g., those using gradient tracking), characterizing a stability region under stronger or different assumptions (e.g., if the frequency of function changes is constrained to be slower than the DGD update iterations, or alternative assumptions about 
local objective functions), and developing robust decentralized methods with tighter stability regions than DGD.


\begin{thebibliography}{10}

\bibitem{bubeck2015convex}
S.~Bubeck.
\newblock Convex optimization: Algorithms and complexity.
\newblock {\em Foundations and Trends in Machine Learning}, 8(3--4):231--357,
  2015.

\bibitem{cattivelli2010diffusion}
F.~S. Cattivelli and A.~H. Sayed.
\newblock Diffusion {LMS} strategies for distributed estimation.
\newblock {\em IEEE Trans. Signal Processing}, 58(3):1035--1048, 2010.

\bibitem{chen2018secure}
V.~Chen, V.~Pastro, and M.~Raykova.
\newblock Secure computation for machine learning with {SPDZ}.
\newblock In {\em NeurIPS Workshop on Privacy Preserving Machine Learning},
  Montr\'{e}al, Canada, Dec. 2018.

\bibitem{hendrickx2017open}
J.~M. Hendrickx and S.~Martin.
\newblock Open multi-agent systems: Gossiping with random arrivals and
  departures.
\newblock In {\em 2017 IEEE 56th Annual Conference on Decision and Control
  (CDC)}, pages 763--768. IEEE, 2017.

\bibitem{lee2017stochastic}
S.~Lee, A.~Nedi\'c, and M.~Raginsky.
\newblock Stochastic dual averaging for decentralized online optimization on
  time-varying communication graphs.
\newblock {\em IEEE Trans. Automatic Control}, 62(12):6407--6414, 2017.

\bibitem{mcmahan2017communication}
H.~B. {McMahan}, E.~Moore, D.~Ramage, S.~Hampson, and B.~{Ag\"{u}era y Arcas}.
\newblock Communication-efficient learning of deep networks from decentralized
  data.
\newblock In {\em International Conference on Artificial Intelligence and
  Statistics (AISTATS)}, volume~54. JMLR Workshops and Conference Proceedings,
  2017.

\bibitem{narayanamurthy2013elastic}
S.~Narayanamurthy, M.~Weimer, D.~Mahajan, T.~Condie, S.~Sellamanickam, and
  K.~Selvaraj.
\newblock Towards resource-elastic machine learning.
\newblock In {\em NeurIPS BigLearn Workshop}, Lake Tahoe, NV, USA, 2013.

\bibitem{Nedic2018network}
A.~Nedi\'{c}, A.~Olshevsky, and M.~G. Rabbat.
\newblock Network topology and communication-computation tradeoffs in
  decentralized optimization.
\newblock {\em Proceedings of the IEEE}, 106(5):953--976, 2018.

\bibitem{nedic2009distributed}
A.~Nedi\'{c} and A.~Ozdaglar.
\newblock Distributed subgradient methods for multi-agent optimization.
\newblock {\em IEEE Trans. Automatic Control}, 54(1):48--61, 2009.

\bibitem{nesterov2003intro}
Y.~Nesterov.
\newblock {\em Introductory Lectures on Convex Optimization}.
\newblock Springer, 2003.

\bibitem{shahrampour2018distributed}
S.~Shahrampour and A.~Jadbabaie.
\newblock Distributed online optimization in dynamic environments using mirror
  descent.
\newblock {\em IEEE Trans. Automatic Control}, 63(3):714--725, 2018.

\bibitem{simonetto2017decentralized}
A.~Simonetto, A.~Koppel, A.~Mokhtari, G.~Leus, and A.~Ribeiro.
\newblock Decentralized prediction-correction methods for networked
  time-varying convex optimization.
\newblock {\em IEEE Trans. Automatic Control}, 62(11):5724--5738, 2017.

\bibitem{taylor2017performance}
A.~B. Taylor, J.~M. Hendrickx, and F.~Glineur.
\newblock Performance estimation toolbox (pesto): automated worst-case analysis
  of first-order optimization methods.
\newblock In {\em 2017 IEEE 56th Annual Conference on Decision and Control
  (CDC)}, pages 1278--1283. IEEE, 2017.

\bibitem{yuan2016convergence}
K.~Yuan, Q.~Ling, and W.~Yin.
\newblock On the convergence of decentralized gradient descent.
\newblock {\em SIAM Journal on Optimization}, 26(3):1835--1854, 2016.

\end{thebibliography}

\newpage

\appendix

\modif{In this Appendix we prove Theorem~\ref{thm:bound_xa-xb}.} Let $f^- \defeq \sum_{i=1}^{n-1} f_i$ denote the sum of the common terms on the right-hand sides of the definitions of $\xa$ and $\xb$ in~\eqref{eq:xab}. Also let $x^- \defeq \arg\min_x f^-(x)$ denote the minimizer of $f^-$. One approach to bound $\norm{\xa - \xb}$ is to bound the distance between the minimizers of $f^-$ and any function of the form $f^- + f_n$ where $f_n \in \mathcal{F}_{\alpha, \beta}$; i.e., to bound $\norm{x^* - x^-}$ where $x^* = \arg\min_x \big(f^-(x) + f_n(x)\big)$. \modif{Observe indeed that both $\xa$ and $\xb$ are particular cases of such $x^*$. Hence a bound on all $\norm{x^* - x^-}$ together with the triangular inequality}
\begin{equation} \label{eq:triangle}
\modif{\norm{\xa - \xb} \le \norm{\xa - x^-} + \norm{\xb - x^-}} 
\end{equation}
\modif{gives us a bound on $\norm{\xa - \xb}$.} To this end, we have the following.

\smallskip

\begin{theorem} \label{thm:sensitivity}
Let $f = f^-  + f_n$, where $f^-$ is as defined above and $f_n \in \mathcal{F}_{\alpha, \beta}$. Let $x^* = \arg\min_x f(x)$ and $x^- = \arg\min_x f^-(x)$. Then
\begin{equation} \label{eq:sensitivity}
\norm{x^* - x^-} \le \min\left( 2 + 2 \sqrt{\kappa}, \frac{2 \sqrt{\kappa} + \kappa}{\sqrt{n-1}}, \frac{2\kappa + \kappa^{3/2}}{n} \right).
\end{equation}
\end{theorem}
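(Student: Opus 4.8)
The plan is to prove the three bounds appearing in the minimum separately, each by an elementary comparison of the two optimality conditions
\[
\nabla f^-(x^-) = 0, \qquad \nabla f(x^*) = \nabla f^-(x^*) + \nabla f_n(x^*) = 0,
\]
combined with the localization guarantees of Theorem~\ref{thm:x*localization}. Since $f^-$ is a sum of $n-1$ functions in $\calF_{\alpha,\beta}$ and $f = f^- + f_n$ a sum of $n$ such functions, Proposition~\ref{prop:f_properties} gives that $f^-$ is $(n-1)\alpha$-strongly convex and $f$ is $n\alpha$-strongly convex, while Theorem~\ref{thm:x*localization} gives $\norm{x^*} \le 1 + \sqrt{\kappa}$ and $\norm{x^-} \le 1 + \sqrt{\kappa}$. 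The \textbf{first bound} $2 + 2\sqrt{\kappa}$ is then immediate from the triangle inequality $\norm{x^* - x^-} \le \norm{x^*} + \norm{x^-}$.

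For the two $n$-dependent bounds, I would first record the auxiliary estimate $\norm{x^- - x_n^*} \le 2 + \sqrt{\kappa}$, where $x_n^* = \arg\min f_n$, obtained from $\norm{x^-} \le 1 + \sqrt{\kappa}$ and $\norm{x_n^*} \le 1$ (Assumption~\ref{ass:x_i*}). To get the \textbf{$1/n$ bound}, I would use the $n\alpha$-strong convexity of $f$ with a first-order comparison anchored at $x^-$: since $\nabla f(x^*) = 0$ and $\nabla f(x^-) = \nabla f_n(x^-)$ (because $\nabla f^-(x^-) = 0$), strong convexity gives $\nabla f_n(x^-)^\T(x^- - x^*) \ge n\alpha \norm{x^- - x^*}^2$. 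Cauchy--Schwarz followed by $\beta$-smoothness of $f_n$ yields
\[
n\alpha \norm{x^- - x^*} \le \norm{\nabla f_n(x^-)} = \norm{\nabla f_n(x^-) - \nabla f_n(x_n^*)} \le \beta \norm{x^- - x_n^*} \le \beta(2 + \sqrt{\kappa}),
\]
so that $\norm{x^* - x^-} \le \kappa(2 + \sqrt{\kappa})/n = (2\kappa + \kappa^{3/2})/n$.

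For the \textbf{$1/\sqrt{n-1}$ bound} I would switch to a second-order (function-value) comparison. Strong convexity of $f^-$ about its minimizer $x^-$ gives $f^-(x^*) - f^-(x^-) \ge \tfrac{(n-1)\alpha}{2}\norm{x^* - x^-}^2$, while optimality of $x^*$ for $f = f^- + f_n$ gives $f^-(x^*) - f^-(x^-) \le f_n(x^-) - f_n(x^*) \le f_n(x^-)$, using $f_n \ge 0$ (Assumption~\ref{ass:x_i*}). Bounding $f_n(x^-) \le \tfrac{\beta}{2}\norm{x^- - x_n^*}^2 \le \tfrac{\beta}{2}(2 + \sqrt{\kappa})^2$ via $\beta$-smoothness and taking square roots gives $\norm{x^* - x^-} \le \sqrt{\kappa}(2 + \sqrt{\kappa})/\sqrt{n-1} = (2\sqrt{\kappa} + \kappa)/\sqrt{n-1}$. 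Taking the minimum of the three bounds completes the proof. The main subtlety — and the only real choice in the argument — is pairing the right strong-convexity modulus with the right estimate: the sharp $1/n$ rate demands the full $n\alpha$ convexity of $f$ with a gradient comparison, whereas the $1/\sqrt{n-1}$ rate demands the $(n-1)\alpha$ convexity of $f^-$ with a function-value comparison; the remainder is routine application of Cauchy--Schwarz and the smoothness/strong-convexity inequalities.
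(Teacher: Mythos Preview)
Your proof is correct, and for the first bound and the $1/\sqrt{n-1}$ bound your argument is essentially identical to the paper's (the paper phrases the function-value comparison as ``$f(x)-f(x^-)\ge\cdots$'' rather than ``$f^-(x^*)-f^-(x^-)\le f_n(x^-)$'', but the ingredients---$(n-1)\alpha$-strong convexity of $f^-$, nonnegativity of $f_n$, $\beta$-smoothness of $f_n$, and the bound $\norm{x^- - x_n^*}\le 2+\sqrt{\kappa}$---are the same).

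For the $1/n$ bound your route is genuinely more direct. The paper isolates this step as a stand-alone proposition (Proposition~\ref{prop:sensitivity-intermediate}) about the minimizer of a sum $g_1+g_2$ and proves it by summing the two co-coercivity inequalities $(\nabla g_i(x^*)-\nabla g_i(x_1^*))^\T(x^*-x_1^*)\ge \frac{\alpha_i\beta_i}{\alpha_i+\beta_i}\norm{x^*-x_1^*}^2+\frac{1}{\alpha_i+\beta_i}\norm{\nabla g_i(x^*)-\nabla g_i(x_1^*)}^2$, then bounding the gradient norms below via strong convexity; specializing to $g_1=f^-$, $g_2=f_n$ yields $\norm{x^*-x^-}\le \beta(2+\sqrt{\kappa})/(n\alpha)$. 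You reach the same conclusion by a single application of the monotonicity inequality $(\nabla f(x^-)-\nabla f(x^*))^\T(x^--x^*)\ge n\alpha\norm{x^--x^*}^2$ together with $\nabla f(x^-)=\nabla f_n(x^-)$ and Cauchy--Schwarz. Your argument is shorter and uses only the strong convexity of the full sum $f$; the paper's version has the advantage of being stated as a reusable lemma with arbitrary $(\alpha_i,\beta_i,R_i)$, but for the specific bound claimed here your approach is cleaner.
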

\smallskip

The proof of Theorem~\ref{thm:bound_xa-xb} follows directly from the application of Theorem~\ref{thm:sensitivity} \modif{to both terms in \eqref{eq:triangle}}. Thus, all that remains is to prove Theorem~\ref{thm:sensitivity}. Before \modif{that,}
we derive the following intermediate result.
\smallskip

\begin{proposition} \label{prop:sensitivity-intermediate}
Let $g_1$ and $g_2$ be two functions such that $g_1$ is $\alpha_1$-strongly convex and $\beta_1$-smooth and $g_2$ is $\alpha_2$-strongly convex and $\beta_2$-smooth. Assume further that the minimizer $x_1^*$ of $g_1$ lies in $B(0, R_1)$ and the minimizer $x_2^*$ of $g_2$ lies in $B(0,R_2)$. Then $x^* = \arg\min_x \big( g_1(x) + g_2(x) \big)$ satisfies
\[
\norm{x^* - x_1^*} \le \frac{\beta_2 (R_1 + R_2)}{\alpha_1 + \alpha_2}.
\]
\end{proposition}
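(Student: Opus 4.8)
The plan is to combine the first-order optimality condition at $x^*$ with the strong convexity of the sum $g \defeq g_1 + g_2$, evaluated cleverly at the point $x_1^*$. The key observation is that since $x^*$ minimizes $g$, we have $\nabla g(x^*) = \nabla g_1(x^*) + \nabla g_2(x^*) = 0$, and $g$ is $(\alpha_1 + \alpha_2)$-strongly convex. Strong convexity gives the monotonicity-type inequality $\nabla g(x_1^*)^\T (x_1^* - x^*) \ge (\alpha_1 + \alpha_2) \norm{x_1^* - x^*}^2$, obtained from the standard bound $(\nabla g(x) - \nabla g(y))^\T (x-y) \ge (\alpha_1+\alpha_2)\norm{x-y}^2$ with $x = x_1^*$, $y = x^*$, using $\nabla g(x^*) = 0$.

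The crucial simplification comes from evaluating $\nabla g$ at $x_1^*$ rather than at an arbitrary point: because $x_1^*$ is the minimizer of $g_1$, we have $\nabla g_1(x_1^*) = 0$, so $\nabla g(x_1^*) = \nabla g_2(x_1^*)$. Thus the strong-convexity inequality becomes $\nabla g_2(x_1^*)^\T (x_1^* - x^*) \ge (\alpha_1 + \alpha_2)\norm{x_1^* - x^*}^2$, and Cauchy--Schwarz then yields $\norm{\nabla g_2(x_1^*)} \ge (\alpha_1 + \alpha_2)\norm{x_1^* - x^*}$.

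It remains to upper-bound $\norm{\nabla g_2(x_1^*)}$. Since $x_2^*$ minimizes $g_2$, we have $\nabla g_2(x_2^*) = 0$, so $\beta_2$-smoothness of $g_2$ gives $\norm{\nabla g_2(x_1^*)} = \norm{\nabla g_2(x_1^*) - \nabla g_2(x_2^*)} \le \beta_2 \norm{x_1^* - x_2^*}$. The assumptions $x_1^* \in B(0,R_1)$ and $x_2^* \in B(0,R_2)$ give $\norm{x_1^* - x_2^*} \le R_1 + R_2$ by the triangle inequality, hence $\norm{\nabla g_2(x_1^*)} \le \beta_2 (R_1 + R_2)$. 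Chaining the two bounds, $(\alpha_1 + \alpha_2)\norm{x_1^* - x^*} \le \beta_2(R_1 + R_2)$, which rearranges to the claim.

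I expect no serious obstacle here; the only genuinely non-mechanical step is recognizing that one should apply the strong-convexity inequality of the sum at the auxiliary point $x_1^*$, so that the $g_1$-gradient cancels and the whole estimate is driven by the smoothness of the \emph{added} function $g_2$ together with the a priori localization of the two individual minimizers. Everything else is a direct application of the standard first-order characterizations of strong convexity and smoothness together with Cauchy--Schwarz.
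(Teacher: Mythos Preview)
Your proof is correct. Both your argument and the paper's reach the same pivotal inequality
\[
-\nabla g_2(x_1^*)^\T (x^* - x_1^*) \ge (\alpha_1 + \alpha_2)\,\norm{x^* - x_1^*}^2,
\]
and from there finish identically (Cauchy--Schwarz, $\beta_2$-smoothness of $g_2$, triangle inequality on $\norm{x_1^*-x_2^*}$). The difference is in how that inequality is obtained. The paper applies the combined strong-convexity/smoothness (co-coercivity) inequality to $g_1$ and $g_2$ separately, sums, and then invokes $\norm{\nabla g_i(x)-\nabla g_i(y)}\ge\alpha_i\norm{x-y}$ to collapse the resulting constants to $\alpha_1+\alpha_2$. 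You instead observe directly that $g=g_1+g_2$ is $(\alpha_1+\alpha_2)$-strongly convex and apply the monotonicity inequality once to $g$ at the pair $(x_1^*,x^*)$, exploiting $\nabla g(x^*)=0$ and $\nabla g_1(x_1^*)=0$. Your route is shorter and uses strictly less: it never touches $\beta_1$ or the smoothness of $g_1$, which is natural since $\beta_1$ does not appear in the conclusion. The paper's detour through co-coercivity is unnecessary here, though it would be the right tool if one wanted a sharper constant that also exploited smoothness.
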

\smallskip

\begin{proof}
It follows from the strong convexity and smoothness of $g_1$ and $g_2$ that
\begin{align*}
(\nabla g_1(x^*) - \nabla &g_1(x_1^*))^\T (x^* - x_1^*) \ge \frac{\alpha_1 \beta_1}{\alpha_1 + \beta_1} \norm{x^* - x_1^*}^2 \\ &+ \frac{1}{\alpha_1 + \beta_1} \norm{\nabla g_1(x^*) - \nabla g_1(x_1^*)}^2 \\
(\nabla g_2(x^*) - \nabla &g_2(x_1^*))^\T (x^* - x_1^*) \ge \frac{\alpha_2 \beta_2}{\alpha_2 + \beta_2} \norm{x^* - x_1^*}^2 \\ &+ \frac{1}{\alpha_2 + \beta_2} \norm{\nabla g_2(x^*) - \nabla g_2(x_1^*)}^2.
\end{align*}
Summing these inequalities while taking into account that $\nabla g_1(x_1^*) = 0$ and $\nabla (g_1 + g_2)(x^*) = 0$, we obtain
\begin{align*}
- \nabla g_2(x_1^*)^\T (x^* - x_1^*) &\ge \frac{1}{\alpha_1 + \beta_2}\norm{\nabla g_1(x^*) - \nabla g_1(x_1^*)}^2 \\ &\quad + \frac{1}{\alpha_2 + \beta_2} \norm{\nabla g_2(x^*) - \nabla g_2(x_1^*)}^2 \\
&\quad + \norm{x^* - x_1^*}^2 \left(\frac{\alpha_1 \beta_1}{\alpha_1 + \beta_1} + \frac{\alpha_2 \beta_2}{\alpha_2 + \beta_2}\right).
\end{align*}
Recall that for an $\alpha$-strongly convex function $f$, for any $x$ and $y$ we have $\norm{\nabla f(x) - \nabla f(y)} \ge \alpha \norm{x - y}$. Therefore,
\begin{align}
&- \nabla g_2(x_1^*)^\T (x^* - x_1^*) \nonumber \\
&\ge \norm{x^* - x_1^*}^2 \left(\frac{\alpha_1 \beta_1}{\alpha_1 + \beta_1} + \frac{\alpha_2 \beta_2}{\alpha_2 + \beta_2} + \frac{\alpha_1^2}{(\alpha_1 + \beta_1)} + \frac{\alpha_2^2}{\alpha_2 + \beta_2} \right) \nonumber \\
&= \norm{x^* - x_1^*}^2 (\alpha_1 + \alpha_2). \label{eq:sensitivity-intermediate1}
\end{align}
By the Cauchy-Schwartz inequality, $-\nabla g_2(x_1^*)^\T (x^* - x_1^*) \le \norm{\nabla g_2(x_1^*)} \norm{x^* - x_1^*}$. In addition, using the smoothness of $g_2$ along with the assumptions that $x_1^* \in B(0,R_1)$ and $x_2^* \in B(0,R_2)$, we obtain that
\begin{align*}
\norm{\nabla g_2(x_1^*)} &= \norm{\nabla g_2(x_1^*) - \nabla g_2(x_2^*)} \\
&\le \beta_2 \norm{x_1^* - x_2^*} \\
&\le \beta_2 (\norm{x_1^*} + \norm{x_2^*}) \\
&\le \beta_2 (R_1 + R_2).
\end{align*}
Combining these in \eqref{eq:sensitivity-intermediate1} gives
\[
\beta_2 (R_1 + R_2) \norm{x^* - x_1^*} \ge (\alpha_1 + \alpha_2) \norm{x^* - x_1^*}^2,
\]
and rearranging terms completes the proof.
\end{proof}
\medskip

\begin{proof}[Proof of Theorem~\ref{thm:sensitivity}]
First observe that $f^-$ is $(n-1)\alpha$-strongly convex and $(n-1)\beta$-smooth by similar reasoning as for $f$ in Proposition~\ref{prop:f_properties}. By Theorem~\ref{thm:x*localization} we have that both $x^*$ and $x^-$ lie in $B(0, 1 + \sqrt{\kappa})$, and thus
\begin{equation} \label{eq:sensitivity-bound1}
\norm{x^* - x^-} \le \norm{x^*} + \norm{x^-} \le 2 + 2\sqrt{\kappa}.
\end{equation}
This bound scales as $\order{\sqrt{\kappa}}$ but does not involve $n$.

To obtain a bound involving $n$, we leverage properties of the functions $f^-$ and $f_n$. Recall that $x_n^*$ denotes the minimizer of $f_n$. Since $f_n$ is $\beta$-smooth,
\begin{align} \label{eq:fa-smooth}
f(x^-) = f^-(x^-) + f_n(x^-) &\le f^-(x^-) + \frac{\beta}{2}\norm{x^- - x_n^* }^2.
\end{align}
Also, since $f^-$ is $(n-1)\alpha$-strongly convex and $\min_x f_n(x) = 0$ by assumption, it follows that for any $x \in \R^d$,
\begin{align} 
f(x) &= f^-(x) + f_n(x) \\
&\ge f^-(x) \\
&\ge f^-(x^-) + \frac{(n-1)\alpha}{2} \norm{x - x^-}^2. \label{eq:f-strongly_convex}
\end{align}
Combining equations~\eqref{eq:fa-smooth} and~\eqref{eq:f-strongly_convex} leads to
\begin{align*}
f(x) - f(x^-) &\ge f^-(x^-) + \frac{(n-1)\alpha}{2}\norm{x - x^-}^2 \\
&\quad - f^-(x^-) - \frac{\beta}{2} \norm{x^- - x_n^*}^2 \\
&= \frac{(n-1)\alpha}{2} \norm{x - x^-}^2 - \frac{\beta}{2} \norm{x^- - x_n^*}^2.
\end{align*}
By \assumptionref{ass:x_i*}, we know that $\norm{x_n^*} \le 1$, and applying Theorem~\ref{thm:x*localization} to $f^-$ gives $\norm{x^-} \le 1 + \sqrt{\kappa}$. Therefore $\norm{x^- - x_n^*} \le 2 + \sqrt{\kappa}$, and so
\begin{align*}
f(x) - f(x^-) &\ge \frac{(n-1)\alpha}{2} \norm{x - x^-}^2 - \frac{\beta}{2} (2 + \sqrt{\kappa})^2 \\
&= \frac{(n-1)\alpha}{2} \left( \norm{x - x^-}^2 - \frac{\kappa (2 + \sqrt{\kappa})^2}{n-1} \right).
\end{align*}
A point $x$ cannot be the minimizer of $f$ if there is another point $x^-$ for which $f(x) - f(x^-) > 0$, or equivalently, if
\[
\norm{x - x^-}^2 > \frac{\kappa (2 + \sqrt{\kappa})^2}{n-1},
\]
and therefore
\begin{equation} \label{eq:sensitivity-bound2}
\norm{x^* - x^-} \le \frac{\kappa + 2 \sqrt{\kappa}}{\sqrt{n-1}}.
\end{equation}

Finally, applying Proposition~\ref{prop:sensitivity-intermediate} with $g_1 = f^-$ (hence $\alpha_1 = (n-1)\alpha$ and $R_1 = 1 + \sqrt{\kappa}$) and  $g_2 = f_n$ (hence $\alpha_2 = \alpha$, $\beta_2 = \beta$, and $R_2 = 1$), we also obtain that
\begin{equation} 
\norm{x^* - x^-} \le \frac{\beta (2 + \sqrt{\kappa})}{n \alpha} = \frac{2 \kappa + \kappa^{3/2}}{n}.
\end{equation}
Combining this with \eqref{eq:sensitivity-bound1} and \eqref{eq:sensitivity-bound2} gives the desired result.
\end{proof}

\end{document}